\newcommand{\om}{\Omega}
\def\C{\mathbb{C}}
\newcommand{\p}{\partial}
\newcommand{\ds}{\displaystyle}
\newcommand{\f}{\frac}
\begin{document}


\title{A multiscale Abel kernel and application in viscoelastic problem}

\author{
Wenlin Qiu\thanks{School of Mathematics, Shandong University, Jinan 250100, China. (Email: wlqiu@sdu.edu.cn)}
\and
Tao Guo\thanks{School of Mathematics and Statistics, Hunan Normal University, Changsha, Hunan 410081, China. (Email: guotao6613@163.com)}
\and
Yiqun Li\thanks{School of Mathematics and Statistics, Wuhan University, Wuhan 430072, China. (Email: YiqunLi24@outlook.com)}
\and
Xu Guo\thanks{Geotechnical and Structural Engineering Center, Shandong University, Jinan 250061, China. (Email: guoxu@sdu.edu.cn)}
\and
Xiangcheng Zheng\thanks{Corresponding author. School of Mathematics, Shandong University, Jinan 250100, China. (Email: xzheng@sdu.edu.cn)}
}
\maketitle

\begin{abstract}
We consider the variable-exponent Abel kernel and demonstrate its multiscale nature in modeling crossover dynamics from the initial quasi-exponential behavior to long-term power-law behavior. Then we apply this to an integro-differential equation modeling, e.g. mechanical vibration of viscoelastic materials with changing material properties. We apply the Crank-Nicolson method and the linear interpolation quadrature to design a temporal second-order scheme, and develop a framework of exponentially weighted energy argument in error estimate to account for the non-positivity and non-monotonicity of the multiscale kernel. Numerical experiments are carried out to substantiate the theoretical findings and the crossover dynamics of the model.
\end{abstract}

\begin{keywords}
  multiscale kernel, power-law, variable-exponent, integro-differential equation, error estimate
\end{keywords}

\begin{AMS}
  45K05, 65M12, 65M60
\end{AMS}

\pagestyle{myheadings}
\thispagestyle{plain}

\markboth{Qiu, Guo, Li, Guo and Zheng}{A multiscale power-law kernel and its application}

\section{Introduction}
\subsection{A multiscale kernel}\label{Sect:k}

We consider the following Abel kernel with the variable exponent $0 < \alpha(t) \leq 1$ \cite{Fan,SunCha}
\begin{equation}\label{k}
\ds k(t) : =  \frac{t^{\alpha(t)-1}}{\Gamma(\alpha(t))}
\end{equation}
where $\Gamma(\cdot)$ is the Euler's Gamma function. Compared with the commonly-used constant-exponent kernel ($\alpha(t)\equiv\alpha$ for some $0<\alpha<1$), which models the power-law phenomena in various applications \cite{Deng,Hao,Jiang}, the variable-exponent kernel \eqref{k} could model more complex phenomena with multiple scales. A typical example is the normal-anomalous diffusion of the particles in worm-like micellar solutions observed in experiments in \cite{Jeo}, which is a crossover dynamics with multiple diffusion scales in time such that a constant-exponent (i.e. single-scale) power-law kernel could not accommodate the transition between multiscale diffusion processes. Instead, it is shown in \cite{ZheLiQiu} that the variable-exponent kernel (\ref{k}) with $\alpha(0)=1$ provides a local modification of subdiffusion that could characterize the normal-anomalous diffusion process.

The success of describing the normal-anomalous diffusion process in \cite{ZheLiQiu} essentially relies on the multiscale feature of the variable-exponent kernel (\ref{k}) with $\alpha(0)=1$. To give a better understanding of its multiscale nature, we first select $\alpha(t)=0.9+0.1e^{-0.1t}$ as an example, which satisfies the following asymptotics
 $$\alpha(t)\sim\alpha(0)+\alpha'(0)t\text{ for }t \text{ small enough and }\alpha(t)\sim 0.9 \text{ for }t \text{ large enough}. $$
 We employ these to compare the variable-exponent Abel kernel (\ref{k}) with its local asymptotics at $t=0$ and $t=\infty$
 \begin{equation}\label{kzz1}
 k_0(t):=\frac{t^{(\alpha(0)+\alpha'(0)t)-1}}{\Gamma(\alpha(0))}= e^{\alpha'(0)t\ln t},~~k_{\infty}(t):=\frac{t^{0.9-1}}{\Gamma(0.9)}=\frac{t^{-0.1}}{\Gamma(0.9)}
 \end{equation}
 in Fig.~\ref{Fig1}(left), from which we observe that the variable-exponent kernel exhibits a crossover dynamics  from the initial quasi-exponential behavior (modeled by $k_0$) to the long-term power-law decay (modeled by $k_\infty$), indicating the multiscale feature of  \eqref{k}. By this means, the variable-exponent kernel   $k$ not only eliminates the initial rapid change and thus the initial singularity of the constant-exponent kernel $k_\infty$, but captures its long-time power-law behavior that $k_0$ does not exhibit. For these reasons, we call (\ref{k}) the {\it multiscale kernel}.

\begin{figure}[h]
\setlength{\abovecaptionskip}{0pt}
\centering
\includegraphics[width=2.5in,height=2in]{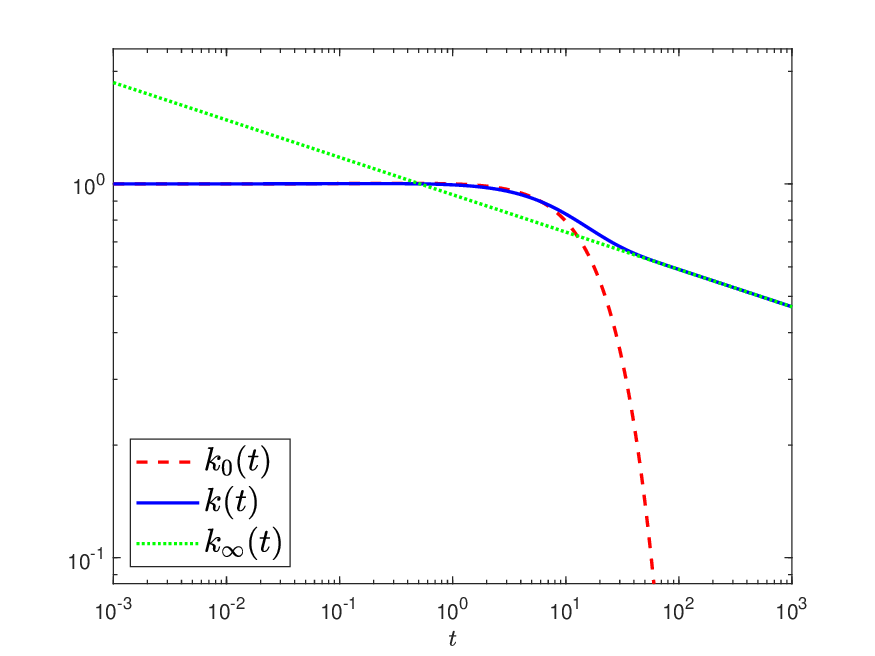}
\includegraphics[width=2.5in,height=2in]{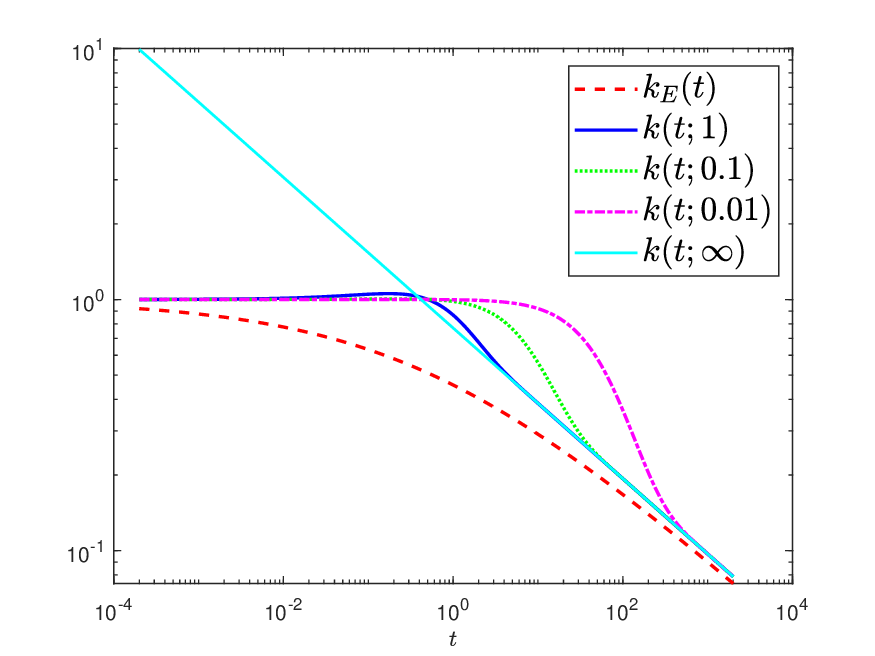}
\caption{{\footnotesize Log-log plots of (left) $k(t)$ in (\ref{k}) with $\alpha(t) = 0.9 + 0.1 e^{-0.1t} $ and its asymptotics in (\ref{kzz1}) and (right) $k(t;a)$ with $\alpha(t;a) = 0.7 + 0.3 e^{-at} $ for different $a$ and the Mittag-Leffler kernel $k_E(t)$.}}
\label{Fig1}\vspace{-0.1in}
\end{figure}

It is worth mentioning that similar multiscale features could also be realized by the well-known Mittag–Leffler kernel
 $$k_E(t):=E_{\beta,1}(-t^\beta),~~E_{\beta,1}(z):=\sum_{i=0}^\infty \frac{z^i}{\Gamma(\beta i+1)},~~z\in\mathbb R$$
  for some $0<\beta<1$. It is demonstrated in \cite[Appendix B]{Bon} and \cite{MukPar} that the Mittag–Leffler kernel behaves like a stretched exponential function, i.e. $e^{\frac{-t^\beta}{\Gamma(1+\beta)}}$, at short times with the stretching exponent $\beta$, and then exhibits a power-law decay like $\frac{t^{-\beta}}{\Gamma(1-\beta)}$ as $t$ tends to infinity. Nevertheless, the proposed muitiscale kernel (\ref{k}) has the following advantages in comparison with the Mittag–Leffler kernel:
\begin{itemize}
\item[$\bullet$]  As the Mittag-Leffler function is typically defined by an infinite series \cite{Jinbook}, it is difficult to evaluate the Mittag-Leffler kernel and the obtained value may not be accurate. In contrast, the multiscale kernel \eqref{k} could be simply evaluated with a very high accuracy.

\item[$\bullet$] The Mittag-Leffler kernel gets close to the power law only if $t$ tends to infinity, while the power law behaviors appear within finite times in most applications. Instead, the variable exponent provides a great flexibility in adjusting the properties of the kernel, which could account for more complicated scenarios.
\end{itemize}
To better illustrate the second statement,  we present the curves of the Mittag-leffler kernel $k_E(t)$ with $\beta=0.3$ and a parameterized multiscale kernel $$k(t;a)=\frac{t^{\alpha(t;a)-1}}{\Gamma(\alpha(t;a))},~~\alpha(t;a) = 0.7 + 0.3 e^{-at} $$
 under different parameters $a$ in Fig.~\ref{Fig1}(right).  We  observe that the Mittag-leffler kernel $k_E(t)$ approaches the  power-law decay (modeled by $k(t;\infty)=\frac{t^{-0.3}}{\Gamma(0.7)}$) only for very large $t$, while the multiscale kernel could tend to the power-law decay at different times by adjusting the parameter $a$.
  Consequently, the above discussions demonstrate the flexibility and the novelty of the multiscale kernel (\ref{k}) and motivate the application of this kernel in practical models.

\subsection{Modeling issues} \label{Sect:model}
We apply the multiscale kernel (\ref{k}) in the following parabolic integro-differential equation (PIDE), which attracts increasing attentions in viscoelastic material modeling \cite{Dan,Friedman,Heard}
     \begin{equation}\label{eq1.1}
     \begin{split}
          \frac{\partial u}{\partial t}(\bm x,t) - \mu \Delta u(\bm x,t) - \zeta I^{(\alpha(t))} \Delta u(\bm x,t) = f(\bm x,t),   \quad  \bm x \in \Omega,  \quad   0< t \leq T
     \end{split}
     \end{equation}
     with the initial conditions
     \begin{equation}\label{eq1.2}
       u(\bm x,0)=u_0(\bm x), \quad  \bm x \in \Omega \cup\partial\Omega
     \end{equation}
     and boundary conditions
     \begin{equation}\label{eq1.3}
     \begin{split}
     u(\bm x,t)= 0,  \quad   \bm x \in \partial\Omega, \quad 0 < t\leq T.
     \end{split}
     \end{equation}
  Here $\Omega\subset \mathbb R^d$ ($1\leq d\leq 3$) is an open bounded smooth domain with boundary $\p\Omega$, $T>0,$ the source term $f(\bm x,t)$ and the initial value $u_0(\bm x)$ are given functions, $\mu>0$ represents the viscosity coefficient, $\zeta >0$, and the variable-exponent integral operator $I^{(\alpha(t))}$   is defined as follows \cite{LorHar,Orosco}
     \begin{equation}\label{eq1.4}
     \begin{split}
          I^{(\alpha(t))}\varphi(t)=(k* \varphi)(t) :=\int_{0}^{t}k(t-s)\varphi(s)ds.
     \end{split}
     \end{equation}

In \eqref{eq1.1},  the variable exponent determined by the characteristic fractal dimensions of the microstructures  characterizes the structure changes  of viscoelastic systems due to long-term cyclic loads, which in turn propagate to macro scales that eventually result in material failure \cite{LiWanZhe,MeeSik}.
Furthermore,  it is validated in \cite{MenYin} that the variable-exponent model may  predict  the compression deformation of amorphous glassy polymers with higher accuracy and fewer parameters, which motivates the application of the variable exponent in \eqref{eq1.1} to depict the changing physical properties of viscoelastic materials.

To demonstrate the impact of the multiscale kernel on solutions, we set $\Omega=(0,1)$,  $\mu=\zeta=1$, $f \equiv 1$ and $u_0 = \sin(\pi x)$ in (\ref{eq1.1})-(\ref{eq1.3}) and numerically compute $\p_t u(0.5,t)$ over a short time period $[0, 0.1]$ under a multiscale kernel and a constant-exponent kernel, respectively, in Fig \ref{Fig2}, which indicates that  the condition $\alpha(0)=1$ effectively eliminates the initial singularity of the solutions as shown in the case $\alpha(t)\equiv 0.2$. This phenomenon again indicates the advantage of the multiscale kernel and will be rigorously proved in Theorem \ref{thm:utt}.

\begin{figure}[h]
\setlength{\abovecaptionskip}{0pt}
\centering
\includegraphics[width=2.5in,height=2in]{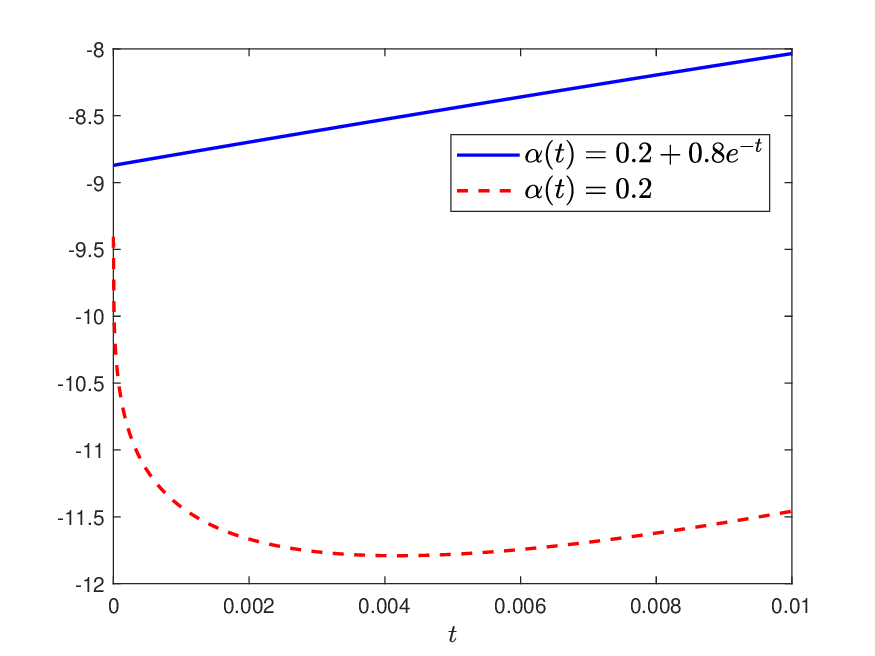}
\caption{Plots of $\p_t u(0.5,t)$ under different $\alpha(t)$.}
\label{Fig2}\vspace{-0.1in}
\end{figure}

\subsection{Novelty and contribution}

For the case $\alpha(t) \equiv \bar \alpha$ for some $0 < \bar \alpha \leq 1$, there exist extensive mathematical analysis results for the PIDE \eqref{eq1.1} \cite{Engler,Friedman,Heard,Heard1,Yin}. For numerical approximation, different numerical methods have been considered for PIDEs and their variants, such as finite element methods \cite{Cannon,Chen,Mustapha1}, discontinuous Galerkin methods \cite{Larsson,Mustapha2}, convolution quadrature methods \cite{Xu1,Xu2}, orthogonal spline collocation methods \cite{Pani,Yan}, finite difference methods \cite{Qiao,Qiu} and so on. Nevertheless, rigorous mathematical and numerical analysis to model \eqref{eq1.1} is not available.
In particular, due to the non-positive and non-monotonic nature of \eqref{k}, many existing time discretization methods for the constant-exponent analogue of \eqref{eq1.4}, such as  convolution quadrature rule \cite{Lub}  and Laplace transform technique \cite{McLean1}, may not be applicable.
In a recent work \cite{ZheLiQiu},  the resolvent estimates are adopted to analyze a mutiscale diffusion model, which, after reformulation,  takes a similar form as \eqref{eq1.1}. Then a first-order-in-time scheme is accordingly developed and analyzed for the mutiscale diffusion model in \cite{ZheLiQiu}, which also applies for the viscoelastic model (\ref{eq1.1})-(\ref{eq1.3}).

The current work considers a second-order-in-time scheme for the model (\ref{eq1.1})-(\ref{eq1.3}), which improves the numerical method in \cite{ZheLiQiu}. To construct a second-order scheme, high-order solution regularity estimates are analyzed, and then we employ the Crank-Nicolson method and the linear interpolation quadrature to discretize temporal operators. To account for the non-positivity and non-monotonicity of \eqref{k}, a framework of exponentially weighted energy argument is developed in numerical analysis, including modifying the difference quotient (cf. (\ref{norms})), numerical scheme, and error equation, selecting a special test function (cf. \eqref{qqqq}), etc. Numerical accuracy  is rigorously proved and then verified by experiments. Furthermore, a crossover dynamics is observed when we model mechanical vibration by (\ref{eq1.1})-(\ref{eq1.3}), which substantiates the multiscale feature of this viscoelastic model.

The rest of the paper is organized as follows. In Section \ref{sec2} we present some  preliminaries to be used subsequently. In Section \ref{sec3}, we derive the well-posedness and high-order regularity estimates of the solutions to (\ref{eq1.1})-(\ref{eq1.3}). In Section \ref{sec5}, we construct a second-order numerical scheme and then prove its stability and error estimate. Numerical experiments are performed in the last section to substantiate the theoretical findings.

\section{Preliminaries}\label{sec2}

\subsection{Notations}
Let $L^p(\om)$ with $1 \le p \le \infty$ be the Banach space of $p$th power Lebesgue integrable functions on $\om$. Given a positive integer $m$,
let  $ W^{m, p}(\Omega)$ be the Sobolev space of $L^p$ functions with the $m$th weakly derivatives in $L^p(\om)$. Let  $H^m(\Omega) := W^{m,2}(\Omega)$ and $H^m_0(\Omega)$ be its subspace with the zero boundary condition reach order $m-1$. For a non-integer $s\geq 0$, $H^s(\Omega)$ is defined through interpolation, see \cite{AdaFou}. Let $\{\lambda_i ,\phi_i\}_{i=1}^\infty$ be eigenpairs of the problem $-\Delta \phi_i = \lambda_i \phi_i$ on $\Omega$ with zero boundary conditions. Then, we introduce the Sobolev space $\check{H}^s(\Omega)$ with $s\geq 0$ by
$$\ds \check{H}^{s}(\Omega) := \bigg \{ v \in L^2(\Omega): \| v \|_{\check{H}^s(\Omega)}^2 : = \sum_{i=1}^{\infty} \lambda_i^{s} (v,\phi_i)^2 < \infty \bigg \},$$
which is a subspace of $H^s(\Omega)$ and satisfies $\check{H}^0(\Omega) = L^2(\Omega)$ and $\check{H}^2(\Omega) = H^2(\Omega) \cap H^1_0(\Omega)$ \cite{Jinbook}.
For a Banach space $\mathcal{X}$, let $W^{m, p}(0,T; \mathcal{X})$ be the space of functions in $W^{m, p}(0,T)$ with respect to $\|\cdot\|_{\mathcal {X}}$. All spaces in this paper are equipped with standard norms \cite{AdaFou,Eva}.

Throughout this paper, we use $Q$ to denote a generic constant
which may be different in different situations. We set $\|\cdot\|:=\|\cdot\|_{L^2(\Omega)}$ and $L^p(\mathcal X)$ for $L^p(0,T;\mathcal X)$ for brevity, and remove the notation $\om$ in the spaces and norms if there is no confusion. For instance, $\|\cdot\|_{L^p(L^2)}$ implies $\|\cdot\|_{L^p(0,T;L^2(\Omega))}$.
Furthermore, we make the {\it \textbf{Assumption A}}:
\begin{itemize}
\item[(i)]  $0 < \alpha_{*} \leq  \alpha(t) \leq 1$, $\alpha(0) = 1$ and $|\alpha^{\prime}(t)| $, $|\alpha^{\prime \prime}(t)| \le Q$ on $[0, T]$.
\item[(ii)] $f \in L^p(L^2)$ for $1< p <\infty$ and $\Delta u_0 \in L^2$.
\end{itemize}


\subsection{Solution representation}

Let $\Gamma_\theta$ be a contour in the complex plane for $\theta\in(\pi/2,\pi)$ and $\delta > 0$, defined as follows
\begin{align*}
\Gamma_\theta := \big \{z\in\C: |{\rm arg}(z)|=\theta, |z|\ge \delta \big \}
\cup \big \{z \in\C: |{\rm arg}(z)|\le \theta, |z|= \delta \big \}.
\end{align*}
For $0<\hat{\mu}\leq 1$ and $Q=Q(\theta,\hat{\mu})$, the following inequalities hold \cite{Akr,Lub}
\begin{equation}\label{GammaEstimate}
\int_{\Gamma_\theta} |z|^{\hat{\mu}-1} |e^{tz}|  \, |d z| \le Q t^{-\hat{\mu}},~~
\Bigg\| \int_{\Gamma_\theta} z^{\hat{\mu}} (z-\mu\Delta)^{-1} e^{tz}
\, d z \Bigg\|_{L^2\rightarrow L^2} \le \f{Q}{t^{\hat{\mu}}}, ~~t\in (0,T].
\end{equation}

For any $q \in L^1_{loc}(0,\infty)$ such that $|q(t)|\leq Qe^{\bar{\sigma} t}$ for large $t$ and for some positive constants $Q$ and $\bar{\sigma}$, we denote the Laplace transform $\mathcal L$ of $q(t)$ as
\begin{equation*}
\begin{split}
 \mathcal{L}q(z):=\int_0^\infty \tilde q(t)e^{-tz}d t, \quad \Re(z)>\bar{\sigma},
\end{split}
\end{equation*}
where $\Re$ denotes the real part of a complex number. Following \cite{Lub}, if $\mathcal{L}q$ is analytic in a sector $\arg(z)<\bar{\chi}$ for some $\bar{\chi}\in (\pi/2,\pi)$ and satisfies that $|(\mathcal{L}q)(z)|\leq Q |z|^{-\hat{\mu}}$ for some $Q,\hat{\mu}>0$, we denote
the corresponding inverse transform $\mathcal L^{-1}$ of $\mathcal{L}q$ in terms of the contour $\Gamma_\theta$ with $\theta<\bar{\chi}$ as
\begin{equation}\label{invLap}
\begin{split}
 \mathcal{L}^{-1}(\mathcal Lq(z)):=\frac{1}{2\pi \rm i}\int_{\Gamma_\theta} e^{tz}\mathcal Lq(z)d z=q(t).
\end{split}
\end{equation}
 Let $E(t):=e^{ \mu t\Delta}$ be the semigroup of operators defined by $\p_t E(t) g -\mu \Delta E(t) g =0$ with $E(t)g =0$ for $\bm x \in \partial \Omega$ and $E(t)g\big|_{t=0} =g$ for $\bm x \in \Omega$. The solution $u$ of the following initial-boundary-value problem
\begin{equation}\label{HeatPDE}\begin{array}{c}
  \partial_t u(\bm x,t) -  \mu \Delta u(\bm x,t)  = f(\bm x,t),~~(\bm x,t)\in\Omega\times(0,T], \\[0.05in]
u(\bm x,t)=0, \quad (\bm x,t)\in \partial\Omega\times(0,T], ~~u(\bm x,0)=0,~~\bm x\in\Omega
\end{array}\end{equation}
can be expressed in terms of the $E(t)$ via the Duhamel's principle
\begin{equation}\label{HeatSoln}
u(\bm x,t) = \int_0^{t} E(t-\theta) f(\bm x,\theta) d\theta,
\end{equation}
where $E(t)$ could be expressed for $\psi \in L^2(\om)$
\begin{equation}\label{Et:Express}
E(t)\psi(\bm x) =\sum_{i=1}^\infty e^{- \mu \lambda_i t}(\psi,\phi_i)\phi_i(\bm x)= \frac{1}{2\pi {\rm i}}\int_{\Gamma_\theta}e^{zt}  (z- \mu\Delta)^{-1}\psi(\bm x) \, d z.
\end{equation}
For $s\geq r\geq -1$ and any $t>0$, the following estimates hold \cite{Akr,Jinbook,Lub}
\begin{equation}\begin{array}{l}\label{E:est}
 \ds \|E(t)\|_{L^2 \to L^2}\le Q,\quad \ds \|E(t) \psi \|_{\check{H}^s}\leq Qt^{-(s-r)/2} \| \psi \|_{\check{H}^r}, \quad \psi \in \check{H}^r.
\end{array}
\end{equation}

Under the {\it \textbf{Assumption A}}, we invoke l’Hospital’s rule to obtain
\begin{equation*}
\begin{array}{l}
\ds \lim_{t\rightarrow 0^+}[\alpha(t)-1]\ln(t)= \lim_{t\rightarrow 0^+} \f{\alpha(t)-1}{({1}/{\ln(t)})} =\lim_{t\rightarrow 0^+}  t \ln^2(t)  \alpha^\prime(t) =0,\\[0.15in]
  \ds \lim_{y\rightarrow s^+}\f{(y-s)^{\alpha( y-s)-1}}{\Gamma(\alpha( y-s))}=\lim_{y\rightarrow s^+} \f{e^{[\alpha(y-s)-1]\ln(y-s)}}{\Gamma(\alpha( y-s))}=1,
  \end{array}
\end{equation*}
and we thus introduce the following estimates that are often used in subsequent analysis \cite{Pod}
\begin{equation}\label{Model:e5}
\begin{split}
& (t-s)^{\alpha( t-s)-1} =  e^{[\alpha( t-s)-1]\ln(t-s)}\le Q,\\
& \int_{0}^{T} e^{-\gamma t} t^{\hat{\mu}-1} d t \leq \gamma^{-\hat{\mu}} \int_{0}^{\infty} e^{-s} s^{\hat{\mu}-1} ds =\gamma^{-\hat{\mu}} \Gamma(\hat{\mu}),\quad 0< \hat{\mu} < 1.
\end{split}
\end{equation}

\section{Mathematical analysis} \label{sec3}
We shall perform mathematical analysis for \eqref{eq1.1}-\eqref{eq1.3}. In general, the well-posedness could be proved following the methods in \cite{ZheLiQiu} such that we only give the results without proof. For higher solution regularity that \cite{ZheLiQiu} does not cover, we give a detailed proof in order to support the construction of high-order numerical scheme.

We first refer the following properties of $k$ from \cite[Lemma 3.2]{ZheLiQiu}.
\begin{lemma}\label{lemma3.1}
  Assume  \textbf{Assumption A} holds. Then, there exists a positive constant $Q$ such that
  \begin{align}\label{eq3.3}
     |k(t)| \leq Q, \quad |k'(t)| \leq Q (1+|\ln(t)|), \quad |k''(t)| \leq Q t^{-1}.
  \end{align}
\end{lemma}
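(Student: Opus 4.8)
The plan is to estimate $k$, $k'$, $k''$ directly by differentiating the closed form $k(t)=e^{[\alpha(t)-1]\ln t}/\Gamma(\alpha(t))$. Write $g(t):=[\alpha(t)-1]\ln t$ so that $k(t)=e^{g(t)}/\Gamma(\alpha(t))$. The first step is the bound on $k$ itself: since $\alpha(0)=1$ and $\alpha$ is $C^2$, we have $\alpha(t)-1=O(t)$, hence $g(t)=O(t\ln t)\to 0$ as $t\to 0^+$, and for $t$ bounded away from $0$ up to $T$ the exponent stays bounded; combined with $\Gamma(\alpha(t))$ being bounded below away from zero (as $\alpha(t)\in[\alpha_*,1]$ keeps $\Gamma$ on a compact subset of $(0,\infty)$), this gives $|k(t)|\le Q$. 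This also reproves the first line of \eqref{Model:e5}.

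Next I would differentiate. We get $k'(t)=k(t)\big(g'(t)-\tfrac{\Gamma'(\alpha(t))}{\Gamma(\alpha(t))}\alpha'(t)\big)$ with $g'(t)=\alpha'(t)\ln t+\tfrac{\alpha(t)-1}{t}$. The second term and the digamma term are bounded on $[0,T]$ by \textbf{Assumption A}(i) together with the boundedness of $\alpha$, so the only potentially singular contributions are $\alpha'(t)\ln t$ (which is $O(|\ln t|)$) and $\tfrac{\alpha(t)-1}{t}$. For the latter, Taylor expansion at $0$ with $\alpha(0)=1$ gives $|\alpha(t)-1|\le Q t$, so $\big|\tfrac{\alpha(t)-1}{t}\big|\le Q$. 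Since $|k(t)|\le Q$, this yields $|k'(t)|\le Q(1+|\ln t|)$, the middle estimate.

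For $k''$, differentiate once more: $k''(t)=k'(t)\,h(t)+k(t)\,h'(t)$ where $h(t)=g'(t)-\tfrac{\Gamma'(\alpha(t))}{\Gamma(\alpha(t))}\alpha'(t)$. From the previous step $|k'(t)|\le Q(1+|\ln t|)$ and $|h(t)|\le Q(1+|\ln t|)$, so the product $k'h$ is $O((1+|\ln t|)^2)$, which is $O(t^{-1})$ near $0$. The term $h'(t)$ contains $\alpha''(t)\ln t$ ($O(|\ln t|)$), $\tfrac{\alpha'(t)}{t}$ ($O(t^{-1})$), $\tfrac{d}{dt}\tfrac{\alpha(t)-1}{t}=\tfrac{\alpha'(t)t-(\alpha(t)-1)}{t^2}$, and the derivative of the digamma term (bounded). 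The delicate piece is $\tfrac{\alpha'(t)t-(\alpha(t)-1)}{t^2}$: by the second-order Taylor expansion $\alpha(t)-1=\alpha'(0)t+O(t^2)$ and $\alpha'(t)=\alpha'(0)+O(t)$, the numerator is $O(t^2)$, so this term is bounded; what survives is $\tfrac{\alpha'(t)}{t}=O(t^{-1})$. Hence $|h'(t)|\le Q t^{-1}$ and, since $|k(t)|\le Q$, we conclude $|k''(t)|\le Q t^{-1}$.

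The main obstacle is purely bookkeeping of the near-$0$ asymptotics: one must consistently use $\alpha(0)=1$ to cancel the would-be $t^{-1}$ and $t^{-2}$ singularities coming from dividing $\alpha(t)-1$ by powers of $t$, and use the $C^2$ bound on $\alpha$ to control the remainders; no genuinely hard inequality is needed. I would also note that all these bounds are uniform on $[0,T]$ because away from $t=0$ every quantity is continuous on a compact interval, so the generic constant $Q$ absorbs the behaviour there, and $\Gamma(\alpha(t))$ together with $\Gamma'(\alpha(t))$ stay bounded since $\alpha(t)$ ranges over the compact set $[\alpha_*,1]\subset(0,\infty)$.
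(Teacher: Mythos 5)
The paper does not prove this lemma at all --- it simply cites \cite[Lemma 3.2]{ZheLiQiu} --- so there is no in-paper argument to compare against. Your direct computation via logarithmic differentiation of $k(t)=e^{[\alpha(t)-1]\ln t}/\Gamma(\alpha(t))$ is correct and self-contained: the two genuinely delicate cancellations (that $(\alpha(t)-1)/t$ is bounded because $\alpha(0)=1$, and that $\bigl(\alpha'(t)t-(\alpha(t)-1)\bigr)/t^2$ is bounded by the second-order Taylor expansion using $|\alpha''|\le Q$) are exactly the points that need care, and you handle both; the remaining observations ($\Gamma$, $\Gamma'$ bounded on the compact range $[\alpha_*,1]$, and $(1+|\ln t|)^2=O(t^{-1})$ near $0$) are routine and correctly invoked. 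This is almost certainly the same elementary route taken in the cited reference, so no further comment is needed.
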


Then the well-posedness could be proved following  \cite[Theorem 3.1]{ZheLiQiu}.
\begin{theorem}\label{thm:Stab}
Under \textbf{Assumption A}, the problem \eqref{eq1.1} has a unique solution $u\in W^{1,p}(L^2)\cap L^p(\check{H}^2) $ for $1 < p < \infty$, and
\begin{equation*}\label{StabEstimate}
\|u\|_{W^{1,p}(L^2)} +\|u\|_{L^p(\check{H}^2)}\le Q \big (\|f\|_{L^p(L^2)} + \|\Delta u_0\|_{L^2} \big ).
\end{equation*}
\end{theorem}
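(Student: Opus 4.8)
The plan is to reformulate \eqref{eq1.1}--\eqref{eq1.3} as a fixed-point / resolvent problem and then apply maximal $L^p$ regularity for the analytic semigroup $E(t)$ generated by $\mu\Delta$. First I would rewrite the equation as $\partial_t u - \mu\Delta u = f + \zeta\, I^{(\alpha(t))}\Delta u$ and, treating the right-hand side as a forcing term, use the Duhamel representation \eqref{HeatSoln} together with \eqref{Et:Express} to obtain the integral equation
\begin{equation*}
u(t) = E(t)u_0 + \int_0^t E(t-\theta)\big(f(\theta) + \zeta\,(k*\Delta u)(\theta)\big)\,d\theta .
\end{equation*}
Here $E(t)u_0$ accounts for the nonzero initial data, and under Assumption A(ii) we have $\Delta u_0\in L^2$, so by \eqref{E:est} the term $E(t)u_0$ lies in $W^{1,p}(L^2)\cap L^p(\check H^2)$ with norm controlled by $\|\Delta u_0\|$. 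The task is then to solve for $u$ in the space $X:=W^{1,p}(L^2)\cap L^p(\check H^2)$.

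The key step is a contraction-mapping argument on a short time interval $[0,T_0]$, using that the convolution with $k$ is a \emph{smoothing} perturbation. Define the map $\mathcal{T}v = E(\cdot)u_0 + \int_0^\cdot E(\cdot-\theta)(f(\theta)+\zeta(k*\Delta v)(\theta))\,d\theta$ on $X$. By maximal $L^p$ regularity for $\mu\Delta$ (the standard estimate $\|\int_0^\cdot E(\cdot-\theta)g(\theta)\,d\theta\|_X \le Q\|g\|_{L^p(L^2)}$, which follows from the resolvent bound \eqref{GammaEstimate}), it suffices to estimate $\|k*\Delta v\|_{L^p(0,T_0;L^2)}$. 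Since $|k(t)|\le Q$ by Lemma \ref{lemma3.1}, Young's convolution inequality gives $\|k*\Delta v\|_{L^p(0,T_0;L^2)} \le \|k\|_{L^1(0,T_0)}\,\|\Delta v\|_{L^p(0,T_0;L^2)} \le Q\,T_0\,\|v\|_X$. Hence $\mathcal{T}$ is a contraction once $\zeta Q T_0 < 1$, yielding a unique local solution; the a priori bound $\|u\|_X \le Q(\|f\|_{L^p(L^2)}+\|\Delta u_0\|)$ on $[0,T_0]$ drops out of the fixed-point estimate. Uniqueness on $[0,T_0]$ is immediate from the contraction.

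To globalize to $[0,T]$, I would iterate: having solved on $[0,T_0]$, restart at $t=T_0$ with the new "initial datum" $u(T_0)\in\check H^2$ (which one checks from the integral equation), absorbing the tail $\int_0^{T_0}k(t-\theta)\Delta u(\theta)\,d\theta$ for $t>T_0$ into the source term — this tail is bounded in $L^p(T_0,T;L^2)$ by $\|u\|_{L^p(0,T_0;\check H^2)}$ again via $|k|\le Q$. Since $T_0$ depends only on $\zeta$ and $Q$ (not on the data), finitely many steps cover $[0,T]$, and summing the estimates gives the stated bound with a constant $Q=Q(\mu,\zeta,T,\alpha)$.

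The main obstacle I anticipate is keeping the constants uniform across the iteration — in particular verifying that $u(T_0)$ genuinely lies in $\check H^2$ with a controlled norm (rather than merely $L^2$), since $W^{1,p}(L^2)\cap L^p(\check H^2)\hookrightarrow C([0,T];\check H^{2-2/p})$ only gives an intermediate space; one may need to exploit the extra smoothing from the convolution term, or alternatively set the problem up directly on the whole interval using a weighted ($e^{-\gamma t}$) norm as in the exponentially weighted energy framework the authors advertise, choosing $\gamma$ large enough that $\zeta\|k*\cdot\|$ becomes a strict contraction on all of $[0,T]$ at once. Since the statement explicitly says the proof follows \cite[Theorem 3.1]{ZheLiQiu} and is therefore omitted, I would in practice only sketch this and refer the reader there for the details of the resolvent estimates.
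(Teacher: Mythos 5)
The paper does not actually print a proof of this theorem: it states that the result ``could be proved following \cite[Theorem 3.1]{ZheLiQiu}'', where the argument is based on the Duhamel representation \eqref{eq4.1}, the resolvent/semigroup bounds \eqref{GammaEstimate}--\eqref{E:est}, and an exponentially weighted absorption of the memory term --- exactly the machinery this paper deploys explicitly in the proof of Theorem \ref{thm:uttt} (cf.\ \eqref{wtt:L2}--\eqref{wtt:e5}, where $e^{-\gamma t}$ is introduced and $\gamma$ is taken large to absorb $Q\gamma^{-1}\|e^{-\gamma t}\partial_t\Delta^2 u\|_{L^p(L^2)}$). Your second, ``alternative'' route --- setting up the fixed point on all of $[0,T]$ in the norm $\|e^{-\gamma t}\cdot\|_{X}$ and using $\|e^{-\gamma t}(k*\Delta v)\|_{L^p(L^2)}\le \|e^{-\gamma t}k\|_{L^1(0,T)}\|e^{-\gamma t}\Delta v\|_{L^p(L^2)}\le Q\gamma^{-1}\|e^{-\gamma t}\Delta v\|_{L^p(L^2)}$ together with the $\gamma$-uniform maximal $L^p$ regularity of $\mu\Delta-\gamma$ --- is therefore essentially the intended proof, and it is correct. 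Your primary route, however, does not close as written, for precisely the reason you flag: after the first step you only know $u(T_0)\in \check H^{2-2/p}$ from the trace embedding of $W^{1,p}(L^2)\cap L^p(\check H^2)$, not $u(T_0)\in\check H^2$, so the restart term $E(t-T_0)u(T_0)$ cannot be bounded in $L^p(T_0,T;\check H^2)$ by the data, and the claim that ``finitely many steps cover $[0,T]$'' with uniform constants is not justified. You should commit to the weighted global argument rather than presenting the local continuation as the main line; with that choice the proposal matches the paper's framework and yields both existence--uniqueness (contraction for $\gamma$ large) and the stated a priori bound (the weighted and unweighted norms being equivalent on $[0,T]$ with constants depending on $\gamma$ and $T$).
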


To prove the high-order regularity estimates of the solutions, we move the convolution term in \eqref{eq1.1} to its right-hand side and then use the solution representation (\ref{HeatSoln}) to get
\begin{align}\label{eq4.1}
      u  & = \left[ E(t)u_0 +   \int_0^{t} E(t-\theta) f(\bm x,\theta) d\theta \right] +  \int_0^{t} E(t-\theta) \zeta(k* \Delta u)(\bm x, \theta) d\theta := \Xi_1 \!+ \!\Xi_2.
\end{align}

Then we follow the proofs of \cite[Lemma 4.1 and Theorem 4.2]{ZheLiQiu} to prove Lemma \ref{lemma4.1} and Theorem \ref{thm:utt}, respectively. Due to the similarity, we omit the proofs for simplicity.
\begin{lemma} \label{lemma4.1}
Assume \textbf{Assumption A} holds and $\alpha\in W^{3,\infty}( 0, T)$. For $0<t\leq T$ and $0 \le \varepsilon\ll 1$, there exists a positive constant $Q=Q(\varepsilon,\|\alpha\|_{W^{3,\infty}},T)$ such that
\begin{equation}\label{eq4.3}
\big |\p_tI^{(1-\varepsilon)} \p_t \big(\zeta(k* \Delta v)\big)\big|
\leq Q \int_0^t \frac{\big |\p_s\Delta v(\bm x,s) \big |ds}{(t-s)^{\varepsilon}} +\f{Q |\Delta v_0|}{t^{\varepsilon}}, \quad v=u \text{ or } \Delta u.
\end{equation}
\end{lemma}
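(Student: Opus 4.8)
The plan is to estimate $\p_t I^{(1-\ve)}\p_t\big(\zeta(k*\Delta v)\big)$ by unwinding the two nested operators and reducing everything to convolutions against integrable kernels. First I would compute $\p_t(k*\Delta v)$. Since $\alpha(0)=1$ the kernel $k$ is bounded near $t=0$ (by Lemma \ref{lemma3.1}), so the convolution $k*\Delta v$ is differentiable and, using $\p_t(k*w)=k(0^+)w + k'*w$ together with the limit $\lim_{t\to0^+}(t^{\alpha(t)-1}/\Gamma(\alpha(t)))=1$ established in the preliminaries, I get $\p_t\big(\zeta(k*\Delta v)\big)=\zeta\Delta v(\bm x,t)+\zeta(k'*\Delta v)(t)$. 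Rewriting $\Delta v(\bm x,t)=\Delta v_0+\int_0^t \p_s\Delta v\,ds$ and integrating the $k'*\Delta v$ term by parts in $s$ (moving the derivative onto $\Delta v$, using the bound $|k'(t)|\le Q(1+|\ln t|)$ which is integrable at $0$), I would express $\p_t\big(\zeta(k*\Delta v)\big)$ as a bounded term involving $\Delta v_0$ plus a convolution-type term of the form $\int_0^t K_1(t-s)\p_s\Delta v(\bm x,s)\,ds$ with $|K_1(\tau)|\le Q(1+|\ln\tau|)$.

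Next I would apply $I^{(1-\ve)}$, i.e. convolve with $(t-s)^{-\ve}/\Gamma(1-\ve)$, and then differentiate once more in $t$. The outer $\p_t I^{(1-\ve)}$ is the key structural object: since $1-\ve<1$, the kernel $t^{-\ve}/\Gamma(1-\ve)$ is only weakly singular, and I would again use $\p_t\big((t^{-\ve}/\Gamma(1-\ve))*w\big)$ via an integration-by-parts / Abel-type identity so that the derivative lands on $w$ rather than on the singular kernel, which keeps $t^{-\ve}$ (integrable) rather than producing $t^{-\ve-1}$ (non-integrable). Concretely, for the term $K_1*\p\Delta v$ I would write $\p_t I^{(1-\ve)}(K_1*\p\Delta v) = \big(\p_t I^{(1-\ve)}K_1\big)*\p\Delta v + (\text{boundary terms at }s=0,t)$, and show $\p_t I^{(1-\ve)}K_1$ is again bounded by $Q(t^{-\ve}+1+|\ln t|)\le Q t^{-\ve}$ on $(0,T]$ by a direct computation using $\int_0^t (t-\tau)^{-\ve}(1+|\ln\tau|)\,d\tau\le Q$ plus the estimate $|K_1'|$-type bound. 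For the $\Delta v_0$ term, $\p_t I^{(1-\ve)}(\zeta\Delta v_0) = \zeta\Delta v_0\,\p_t\!\int_0^t \frac{(t-s)^{-\ve}}{\Gamma(1-\ve)}ds = \zeta\Delta v_0 \frac{t^{-\ve}}{\Gamma(1-\ve)}$, giving exactly the $Q|\Delta v_0|t^{-\ve}$ term in \eqref{eq4.3}. Collecting all pieces yields the stated pointwise bound, with the convolution against $\big|\p_s\Delta v\big|$ controlled by the uniformly bounded kernels and hence absorbable into $Q\int_0^t (t-s)^{-\ve}|\p_s\Delta v|\,ds$ after crudely dominating $1+|\ln\tau|$ and $\tau^{-\ve}$-type weights.

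The technical heart is the commutation $\p_t\circ I^{(1-\ve)}$ acting on a kernel with a logarithmic singularity: I need the variable exponent $\alpha$ to be regular enough — this is where the hypothesis $\alpha\in W^{3,\infty}(0,T)$ enters — so that the derivatives of $k$ up to the order needed ($k''$ bounded by $t^{-1}$, and the mixed $\p_t[(t-s)^{-\ve}$-convolution with $k'$]) remain under control, and so that differentiating the product $e^{(\alpha(t)-s)\ln(\cdot)}$ under the integral sign is justified. The main obstacle I anticipate is bookkeeping the several boundary terms that arise from the two successive integrations by parts (at $s=0$ and $s=t$) and checking that each is either identically zero (thanks to the regularization $(t-s)^{\alpha(t-s)-1}\to1$ and $(t-s)^{-\ve}\to\infty$ being tamed against a vanishing factor) or reproduces the $|\Delta v_0|t^{-\ve}$ contribution; none of these is deep, but getting the cancellations exactly right is delicate. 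Since the corresponding statement \cite[Lemma 4.1]{ZheLiQiu} is proved by precisely this route, I would follow that argument line by line, checking only that the coefficient $\mu\Delta$ from the heat part plays no role here (it does not, since this lemma is purely about the kernel side), which is why the paper legitimately omits the proof.
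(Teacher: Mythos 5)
Your proposal is essentially correct and follows the same route as the proof the paper points to (the paper itself omits the argument, deferring to \cite{ZheLiQiu}): differentiate the convolution so that $\p_t(k*\Delta v)=k(t)\Delta v_0+(k*\p_t\Delta v)(t)$, apply $I^{(1-\varepsilon)}$, and differentiate again using $k(0^+)=1$ and $|k'(t)|\le Q(1+|\ln t|)$ from Lemma \ref{lemma3.1}, so that the only singular factors produced are $t^{-\varepsilon}$ multiplying $\Delta v_0$ and $(t-s)^{-\varepsilon}$ convolved with $|\p_s\Delta v|$. Your intermediate detour of first placing the derivative on $k$ and then integrating by parts back onto $\Delta v$ is redundant but harmless, and the boundary terms you worry about indeed either vanish (since $(I^{(1-\varepsilon)}k)(0^+)=0$) or reproduce exactly the $Q|\Delta v_0|t^{-\varepsilon}$ contribution.
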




\begin{theorem}\label{thm:utt}
Suppose \textbf{Assumption A} holds and $\alpha\in W^{3,\infty}( 0, T)$,  $ u_0 \in \check H^4$ and $f\in W^{1,p}(L^2)\cap L^p(\check H^{2+\sigma})$ for $0 < \sigma \ll 1$ and $ 1 \le p \le\infty$, then the solution of \eqref{eq1.1} satisfies
\begin{equation}\label{thm:utt:e1}
\begin{array}{l}
\ds \|u\|_{W^{2,p}(L^2)}+\|u\|_{L^\infty(\check H^2)}+\|u\|_{W^{1,p}(\check H^2)} \\[0.1in]
\ds \qquad \qquad \leq Q\big( \|f\|_{W^{1,p}(L^2)} + \| f\|_{L^p(\check H^{2+\sigma})} +\| u_0\|_{\check H^4}\big).
\end{array}
\end{equation}
\end{theorem}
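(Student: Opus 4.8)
The plan is to bootstrap from the already-established first-order regularity of Theorem~\ref{thm:Stab} using the Duhamel decomposition \eqref{eq4.1} together with the smoothing estimates \eqref{E:est} and the kernel bounds in Lemma~\ref{lemma3.1} and Lemma~\ref{lemma4.1}. The three quantities to bound are $\|u\|_{W^{2,p}(L^2)}$ (i.e.\ $\p_t^2 u$ in $L^p(L^2)$), $\|u\|_{L^\infty(\check H^2)}$, and $\|u\|_{W^{1,p}(\check H^2)}$ (i.e.\ $\p_t \Delta u$ in $L^p(L^2)$, using $\check H^2 = H^2\cap H^1_0$ and elliptic regularity). The key device is to differentiate \eqref{eq1.1} once in $t$: since $\alpha(0)=1$, the convolution term $\zeta I^{(\alpha(t))}\Delta u$ is regular enough that $\p_t$ of it is again controllable — this is exactly the content of Lemma~\ref{lemma4.1}, which provides the bound on $\p_t I^{(1-\ve)}\p_t(\zeta k * \Delta v)$ with a mild singularity $(t-s)^{-\ve}$ and a term $t^{-\ve}|\Delta v_0|$. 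The reason we need $u_0 \in \check H^4$ and $f \in W^{1,p}(L^2)\cap L^p(\check H^{2+\sigma})$ is to feed Theorem~\ref{thm:Stab} at the level of the differentiated equation (where $\Delta u_0$, and hence $\Delta^2 u_0$, plus $\p_t f$ and $\Delta^{1+\sigma/2} f$ appear as data), and the small loss $\sigma$ absorbs the $t^{-\ve}$-type singularities through the estimates in \eqref{GammaEstimate} and \eqref{Model:e5}.

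Concretely, I would proceed as follows. First, apply $\Delta$ to the decomposition \eqref{eq4.1}: write $\Delta u = \Delta\Xi_1 + \Delta\Xi_2$, and estimate $\|\Delta\Xi_1\|_{L^\infty(L^2)}$ using $\|E(t)\Delta u_0\| \le Q\|\Delta u_0\|$ and $\|E(t-\theta)\Delta f\|$ controlled via $\|E(t)\psi\|_{\check H^s}\le Qt^{-(s-r)/2}\|\psi\|_{\check H^r}$ with $s=2+\sigma$, $r=\sigma$ (or a similar split) so that the resulting time-convolution integral against $(t-\theta)^{-1+\sigma/2}$ is bounded by Young's inequality and \eqref{Model:e5}. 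For $\Delta\Xi_2$, substitute the already-known bound $\Delta u \in L^\infty(L^2)$ from Theorem~\ref{thm:Stab} (applied at one higher derivative level, bootstrapped) into $k*\Delta\Delta u$, using $|k|\le Q$ to get $\|k*\Delta\Delta u\|$ bounded, then smooth once more with $E(t-\theta)$. This yields $\|u\|_{L^\infty(\check H^2)}$. Second, for the two $L^p$-in-time estimates, differentiate \eqref{eq1.1} in $t$ to obtain a PIDE for $w := \p_t u$ of the same structural form, with source $\p_t f + \p_t(\zeta I^{(\alpha)}\Delta u)$ and initial value $w(0) = \mu\Delta u_0 + f(\cdot,0)$ (reading off from the equation at $t=0$, where the convolution term vanishes). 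Apply Theorem~\ref{thm:Stab} to $w$: this gives $\|\p_t w\|_{L^p(L^2)} + \|\Delta w\|_{L^p(L^2)} \le Q(\|\p_t f + \p_t(\zeta I^{(\alpha)}\Delta u)\|_{L^p(L^2)} + \|\Delta w(0)\|)$. The term $\|\Delta w(0)\| = \|\mu\Delta^2 u_0 + \Delta f(\cdot,0)\|$ is controlled by $\|u_0\|_{\check H^4}$ and the trace/embedding $\|f(\cdot,0)\|_{\check H^2} \le Q\|f\|_{W^{1,p}(L^2)\cap L^p(\check H^{2+\sigma})}$. The convolution-source term $\|\p_t(\zeta k*\Delta u)\|_{L^p(L^2)}$ is where Lemma~\ref{lemma4.1} (with $\ve = 0$, or via the $I^{(1-\ve)}$ trick absorbed by elliptic regularity) enters: it bounds this in terms of $\|\p_t \Delta u\|$ and $\|\Delta u_0\|$, quantities already in hand, possibly after a short Gronwall-type argument in $L^p$ to close the loop since $\p_t\Delta u$ appears on both sides.

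The main obstacle is the circularity: $\|\p_t\Delta u\|_{L^p(L^2)}$ appears both as the quantity we want to bound and, through $\p_t(\zeta k * \Delta u)$, on the right-hand side of the estimate coming from Theorem~\ref{thm:Stab}. Closing this requires either a generalized Gronwall inequality adapted to the weakly singular convolution kernel $(t-s)^{-\ve}$ (which is standard for Abel-type kernels, e.g.\ via iterated convolution), or a time-localization/bootstrap on short intervals where the convolution contribution has small operator norm, followed by continuation. A secondary technical point is the careful handling of the $t^{-\ve}|\Delta v_0|$ term from Lemma~\ref{lemma4.1}: its $L^p(0,T)$ norm in $t$ is finite precisely because $\ve < 1/p$ can be arranged (choosing $\ve$ small), and this is why the statement tolerates $1 \le p \le \infty$ only with the extra regularity $\sigma > 0$ on $f$ to give room. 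Since the paper explicitly says these proofs mirror \cite[Lemma 4.1 and Theorem 4.2]{ZheLiQiu} and are omitted, I would lean on that correspondence and present the bootstrap skeleton above rather than re-derive every convolution estimate.
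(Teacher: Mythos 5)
Your overall architecture matches the paper's: the Duhamel decomposition \eqref{eq4.1}, the smoothing estimates \eqref{E:est} and \eqref{GammaEstimate}, Lemma \ref{lemma4.1} for the differentiated memory term, and a device to break the circular dependence on $\p_t\Delta u$. (The paper defers this proof to \cite[Theorem 4.2]{ZheLiQiu}, but its detailed proof of Theorem \ref{thm:uttt} is the in-paper template, and it follows exactly this skeleton.) Two concrete steps would fail as written, however. First, your route to $\|u\|_{L^\infty(\check H^2)}$ pushes $\Delta$ through $\Xi_2$ to produce $k*\Delta^2 u$ and then invokes a bound on $\Delta^2 u$; no such bound is available under the hypotheses of this theorem --- controlling $\Delta^2 u$ is precisely the content of Theorem \ref{thm:uttt} and needs $u_0\in\check H^6$ and more regular $f$. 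The $L^\infty(\check H^2)$ bound should instead be read off from the $W^{1,p}(\check H^2)$ bound via $W^{1,p}(0,T;\check H^2)\hookrightarrow C([0,T];\check H^2)$, valid for all $p\ge 1$. (Relatedly, the split $s=2+\sigma$, $r=\sigma$ in \eqref{E:est} yields the non-integrable factor $(t-\theta)^{-1}$; the working choice, as in \eqref{wtt:e1}, is $s=2$, $r=\sigma$, giving $(t-\theta)^{-(2-\sigma)/2}$ against $\|\Delta f\|_{\check H^\sigma}=\|f\|_{\check H^{2+\sigma}}$.) Second, applying Theorem \ref{thm:Stab} to $w=\p_t u$ requires $\Delta w(0)=\mu\Delta^2u_0+\Delta f(\cdot,0)\in L^2$, and your claimed trace bound $\|f(\cdot,0)\|_{\check H^2}\le Q\|f\|_{W^{1,p}(L^2)\cap L^p(\check H^{2+\sigma})}$ does not hold for finite $p$: for $p$ near $1$ the trace of such $f$ at $t=0$ lands only near $L^2$, and this theorem, unlike Theorem \ref{thm:uttt}, does not assume $f\in W^{1,p}(\check H^2)$. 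The paper's route avoids any pointwise-in-time evaluation of $\Delta f$ by differentiating the Duhamel representation itself, so $\Delta f$ enters only through $\|f\|_{L^p(\check H^{2+\sigma})}$ via terms of the form $\int_0^t E'(t-s)\Delta f\,ds$.

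On the closing device: your generalized-Gr\"onwall or time-localization proposal is workable in principle, but the paper's mechanism is the exponential weight $e^{-\gamma t}$ combined with Young's convolution inequality and \eqref{Model:e5}, which places a factor $Q\gamma^{-1}$ (or $Q\gamma^{-(1-\ve)}$) in front of the weighted norm of $\p_t\Delta u$ that is then absorbed by taking $\gamma$ large; this is the ``exponentially weighted energy argument'' the paper advertises, and it closes the loop uniformly for all $1\le p\le\infty$ in one stroke. Your observation that $\ve$ must be taken with $\ve p<1$ to keep $t^{-\ve}\|\Delta u_0\|$ in $L^p$ is correct and matches the paper's treatment.
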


Now we give a detailed proof for higher regularity of the solutions that \cite{ZheLiQiu} does not cover.
\begin{theorem}\label{thm:uttt}
Suppose  \textbf{Assumption A} holds and $\alpha\in W^{4,\infty}( 0, T)$,  $ u_0 \in \check H^6$ and $f\in W^{2,p}(L^2)\cap W^{1,p}(\check H^2) \cap L^p(\check H^{4+\sigma})$  for $0 < \sigma \ll 1$ and $ 1 \le p < \infty$.
Then the following estimates hold
\begin{align}
&  \|u\|_{W^{2,p}(\check H^2)}
 \leq Q\Big( \|f\|_{W^{1,p}(\check H^2)} + \| f\|_{L^p(\check H^{4+\sigma})} +\|u_0\|_{\check H^6}\Big), \label{uttt:e1} \\
& \|u\|_{W^{3,p}(L^2)}   \le Q\left(\| u_0\|_{\check H^6}  + \|f\|_{W^{1,p}(\check H^2)} + \| f\|_{L^p(\check H^{4+\sigma})} + \|f\|_{W^{2,p}(L^2)} \right) \nonumber \\
& \qquad \qquad \qquad + Q  t^{-\varepsilon} \|\Delta u_0\|_{L^2}, \quad 0 < \varepsilon \ll 1.
\label{uttt:e2}
\end{align}
\end{theorem}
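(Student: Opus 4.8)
The plan is to bootstrap from the lower-order regularity already established in Theorems \ref{thm:Stab} and \ref{thm:utt} and the kernel estimate in Lemma \ref{lemma4.1}, applying the decomposition \eqref{eq4.1} at a higher differentiability level. For \eqref{uttt:e1}, I would differentiate the identity $u = \Xi_1 + \Xi_2$ twice in time and estimate in the $\check H^2$ norm. The term $\Xi_1 = E(t)u_0 + \int_0^t E(t-\theta)f\,d\theta$ is handled by the smoothing estimates \eqref{E:est}: differentiating the Duhamel integral in $t$ trades a time derivative for a boundary term $E(t)f(\cdot,0)$ plus $\int_0^t E(t-\theta)\partial_\theta f\,d\theta$, and the parabolic smoothing $\|E(t)\psi\|_{\check H^{s}} \le Q t^{-(s-r)/2}\|\psi\|_{\check H^r}$ lets me absorb two extra spatial derivatives at the cost of an integrable singularity $t^{-1+\sigma/2}$, which is why the hypothesis $f \in L^p(\check H^{4+\sigma})$ with the small $\sigma$ and $u_0 \in \check H^6$ appear — these match exactly the pattern in \eqref{thm:utt:e1} shifted up by two spatial orders. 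For $\Xi_2 = \int_0^t E(t-\theta)\zeta(k*\Delta u)(\theta)\,d\theta$, I would write $\partial_t^2 \Xi_2$ by moving derivatives onto the convolution factor (again producing boundary terms at $\theta = 0$, controlled since $k*\Delta u$ and its first derivative vanish or are bounded at $0$ by Lemma \ref{lemma3.1} and $u_0 \in \check H^6 \subset \check H^4$), reducing matters to bounding $\partial_t I^{(1-\varepsilon)}\partial_t(\zeta(k*\Delta v))$ with $v = \Delta u$ via \eqref{eq4.3}; this gives a Volterra-type inequality $\|\partial_t^2 u\|_{\check H^2} \lesssim (\text{data}) + Q\int_0^t (t-s)^{-\varepsilon}\|\partial_s \Delta^2 u\|\,ds$, i.e. control of $\|u\|_{W^{2,p}(\check H^2)}$ in terms of $\|u\|_{W^{1,p}(\check H^4)}$-type quantities — but $\check H^4$ norms of $u$ reduce to $\check H^2$ norms of $\Delta u$, and applying Theorem \ref{thm:utt} to the equation satisfied by $\Delta u$ (which has the same structure with source $\Delta f$) closes the loop. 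A generalized Grönwall inequality for weakly singular kernels then yields \eqref{uttt:e1}.

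For \eqref{uttt:e2}, I would differentiate the PIDE \eqref{eq1.1} itself: $\partial_t^2 u = \mu \Delta \partial_t u + \zeta \partial_t\big(I^{(\alpha(t))}\Delta u\big) + \partial_t f$, and then apply $\partial_t$ once more to get $\partial_t^3 u$ in terms of $\Delta \partial_t^2 u$, $\partial_t^2\big(I^{(\alpha(t))}\Delta u\big)$, and $\partial_t^2 f$. The term $\Delta \partial_t^2 u = \partial_t^2 u$ measured in $\check H^2$ is exactly what \eqref{uttt:e1} just bounded; the source contributes $\|f\|_{W^{2,p}(L^2)}$; and the convolution term, after expanding $\partial_t^2(k*\Delta u)$ using the $\varepsilon$-smoothed estimate of Lemma \ref{lemma4.1} once more, is again a weakly singular Volterra term in $\|\partial_t^2 u\|$-type quantities that can be absorbed. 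The extra term $Q t^{-\varepsilon}\|\Delta u_0\|_{L^2}$ is precisely the non-integrable-at-higher-order boundary contribution coming from the $1/t^\varepsilon$ tail in \eqref{eq4.3} with $v = \Delta u$ and $\Delta v_0$ replaced by the data at $t=0$; it cannot be removed because the kernel $k$, though bounded, has $k'(t) \sim \ln t$ and $k''(t)\sim t^{-1}$, so the third time derivative of the solution genuinely inherits a mild singularity unless more compatibility is imposed.

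The main obstacle I anticipate is bookkeeping the boundary terms at $\theta = 0$ and $s = 0$ that arise each time a time derivative is integrated by parts onto the convolution or Duhamel factor, and verifying they are controlled by the stated data norms — in particular checking that $(k*\Delta u)(0) = 0$ and that $\partial_t(k*\Delta u)(t) = k(t)\Delta u_0 + (k*\Delta\partial_t u)(t)$ stays bounded near $t=0$ using $|k(t)|\le Q$ from Lemma \ref{lemma3.1} and $\Delta u_0 \in \check H^4$ from $u_0 \in \check H^6$. A secondary technical point is that all estimates must be carried out in $L^p(0,T)$ in time rather than pointwise, so the weakly singular Volterra inequalities need the $L^p$-version of the generalized Grönwall lemma (available since the kernel $(t-s)^{-\varepsilon}$ is in $L^1$), and one must be careful that the convolution $\int_0^t (t-s)^{-\varepsilon}(\cdot)\,ds$ maps $L^p \to L^p$ with norm $\le Q T^{1-\varepsilon}$ by Young's inequality. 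Once these pieces are in place the proof is a routine, if lengthy, iteration of the arguments already used for Theorems \ref{thm:Stab} and \ref{thm:utt}.
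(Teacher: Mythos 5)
Your proposal is correct and, in all essentials, follows the paper's own route: the same decomposition \eqref{eq4.1}, the same key Lemma~\ref{lemma4.1} fed through the resolvent representation to treat $\partial_t^2\Xi_2$, the same parabolic smoothing bound $(t-\theta)^{-(2-\sigma)/2}$ on the Duhamel part (which is exactly why $\check H^{4+\sigma}$ and $\check H^6$ enter), and the same device for \eqref{uttt:e2} of differentiating the PIDE twice, with $\partial_t^2(k*\Delta u)=k'(t)\Delta u_0+k(t)\Delta\partial_t u(\cdot,0)+k*\partial_t^2\Delta u$ and $|k'(t)|\le Q(1+|\ln t|)\le Qt^{-\varepsilon}$ producing the unavoidable $Qt^{-\varepsilon}\|\Delta u_0\|$ term (your attribution of that term to the $s^{-\varepsilon}$ tail of \eqref{eq4.3} is slightly off --- in the paper it comes from $k'(t)\Delta u_0$ --- but you name the correct mechanism, $k'(t)\sim\ln t$). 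The one genuine divergence is how the loop is closed for \eqref{uttt:e1}: the Volterra inequality $\|\partial_t^2\Delta u\|\lesssim(\mathrm{data})+\int_0^t\|\partial_s\Delta^2u\|\,ds$ has $\partial_t\Delta^2u$ on the right rather than $\partial_t^2\Delta u$, so Gr\"onwall alone does not close it. You propose to bound $\|\partial_t\Delta^2u\|_{L^p(L^2)}$ a priori by applying Theorem~\ref{thm:utt} to the shifted problem solved by $\Delta u$ (source $\Delta f$, datum $\Delta u_0$), which is legitimate in the spectral $\check H^s$ scale and whose hypotheses are precisely $u_0\in\check H^6$ and $f\in W^{1,p}(\check H^2)\cap L^p(\check H^{4+\sigma})$; after that no Gr\"onwall is needed at all. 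The paper instead derives a second inequality from the time-differentiated equation, $\mu\,\partial_t\Delta^2u=\partial_t^2\Delta u-\partial_t\Delta f-\zeta\,\partial_t(k*\Delta^2u)$, and couples the two estimates through the exponentially weighted norms $e^{-\gamma t}$ with $\gamma$ large, so that the factor $Q\gamma^{-1}$ from Young's convolution inequality absorbs the cross term. Your shift argument is the more economical once the lower-order theorem is in hand; the paper's weighting argument is self-contained and is the same machinery it reuses in the discrete error analysis. Either way the stated bounds follow, and I see no gap.
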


\begin{proof} We first prove \eqref{uttt:e1}. Note that $\p_t^2 \Delta u= \p_t^2 \Delta\Xi_1 + \p_t^2 \Delta\Xi_2$.
By \eqref{Et:Express}, we have $E^{\prime \prime}(t)\Delta u_0 =   \mu^2\Delta E(t)  \Delta^2 u_0 = \mu^2 E(t) \Delta^3 u_0$ by $u_0 \in \check H^6$, and  we directly compute $ \p_t^2\Delta \Xi_1$ in \eqref{eq4.1} to get
\begin{equation}\label{P2L1}
\begin{split}
\p_t^2\Delta\Xi_1 &  =  \mu^2  E(t) \Delta^3 u_0 + \p_t \Delta f +  \mu \Delta^2 f +  \mu^2
 \int_0^t\sum_{i=1}^\infty \lambda_i^2e^{-\mu\lambda_i (t-s)}(\Delta f,\phi_i)\phi_i ds\\
&  =  \mu^2 E(t) \Delta^3 u_0 + \p_t \Delta f + \mu \Delta^2 f +  \mu \int_0^t E^\prime (t-s) \Delta^2 f(\bm x, s) ds.
\end{split}
\end{equation}
We first use  \eqref{E:est} to obtain
\begin{equation*}\begin{array}{ll}
    \ds \int_0^t \big\| E^\prime (t-s) \Delta^2 f(\cdot, s)\big\|_{L^2(\Omega)} ds & \ds \hspace{-0.1in} \ds \le Q\int_0^t \big\| E (t-s) \Delta^2 f(\cdot, s)\big\|_{\check H^2} ds\\[0.1in]
    \ds & \ds \hspace{-0.1in} \le Q\int_0^t (t-s)^{-\f{2-\sigma}{2}} \|\Delta^2 f(\cdot, s)\|_{\check H^\sigma}ds
    \end{array}
\end{equation*}
for $0 < \sigma \ll 1$, which together with \eqref{P2L1} gives
\begin{equation}\label{wtt:e1}
\begin{array}{ll}
 \|\p_t^2\Delta\Xi_1\|_{L^2(\Omega)} \ds\le Q\big( \|E(t)\|_{L^2 \rightarrow L^2} \|\Delta^3 u_0\|_{L^2} + \|\p_t \Delta f\|_{L^2} + \|\Delta^2 f\|_{L^2}\big)  \\[0.05in]
\ds   \qquad  \qquad \qquad  + Q\int_0^t \big\| E^\prime (t-s) \Delta^2 f(\cdot, s)\big\|_{L^2} ds\\[0.1in]
\ds \ds \le Q(\|\Delta^3 u_0\|_{L^2} + \|\p_t \Delta f\|_{L^2} + \|\Delta^2 f\|_{L^2}) \\[0.05in]
\ds   \qquad  \qquad \qquad  + Q\int_0^t (t-s)^{-\f{2-\sigma}{2}} \|\Delta^2 f(\cdot, s)\|_{\check H^\sigma}ds.
\end{array}
\end{equation}
We take $\|\cdot\|_{L^p(0, T)}$ norm on both sides of \eqref{wtt:e1} and apply Young's inequality to obtain
\begin{equation}\label{wtt:e2}
\begin{array}{rl}
 \|\p_t^2 \Delta \Xi_1\|_{L^p(L^2)}  & \hspace{-0.125in} \ds  \le Q\big( \|\Delta^3 u_0\|_{L^2}  \!+\! \|f\|_{W^{1,p}(\check H^2)}   \!+\! \|\Delta^2 f\|_{L^p(L^2)}\big)\\[0.1in]
 & \hspace{-0.125in} + Q  \big\|t^{-\f{2-\sigma}{2}} * \|\Delta^2 f\|_{\check H^\sigma} \big\|_{L^p(0, T)}\\[0.1in]
\ds & \hspace{-0.125in} \ds \le Q\big(\|\Delta^3 u_0\|_{L^2} + \|f\|_{W^{1,p}(\check H^2)} + \| f\|_{L^p(\check H^{4+\sigma})}\big).
\end{array}
\end{equation}
Next, we shall discuss the analysis of $\p_t^2 \Delta \Xi_2$. We utilize the commutativity of the convolution operator to deduce that
$$\begin{array}{l}
\ds \p_t \int_0^{t} E^\prime(t-s)\,  \big( \zeta(k* \Delta u)(\bm x, s)  \big)ds
= \p_t \int_0^{t} E^\prime(s)  \big ( \zeta(k* \Delta u)(\bm x, y)  \big ) \big |_{y=t-s}\,ds\\[0.1in]
\ds \quad = \int_0^{t} E^\prime(s)  \p_t\big(\zeta(k* \Delta u)(\bm x, y)\big) \big |_{y=t-s}\big) ds
= -\int_0^{t} E^\prime(t-s) \p_s\big(\zeta(k* \Delta u)(\bm x, s) \big) ds.
\end{array}$$
Differentiate $\Xi_2$ in \eqref{eq4.1} twice with respect to $t$ and apply the above resulting equation to obtain
\begin{align}
\ds \p_t \Delta \Xi_2 = \int_0^{t} E^\prime(t-s)\, \big( \zeta(k* \Delta^2 u)(\bm x, s)  \big)ds + \zeta(k* \Delta^2 u)(\bm x,t), \nonumber \\
\ds \p_t^2 \Delta\Xi_2 = -\int_0^{t} E^\prime(t-s) \p_s\big( \zeta(k* \Delta^2 u) (\bm x, s) \big) ds + \p_t\big( \zeta(k* \Delta^2 u)(\bm x,t)\big). \label{wtt:e3}
\end{align}
Now, we utilize the following estimate
 \begin{equation}\begin{array}{l}\label{wtt:E3}
   \ds \big|\p_t( \zeta(k* \Delta^2 u) )\big| \le Q \int_0^t  \big|\p_\theta \Delta^2 u\big| d \theta + Q  |\Delta^2 u_0|
   \end{array}
 \end{equation}
 to directly bound the second term on the right-hand side of \eqref{wtt:e3} and follow \eqref{invLap} and the similar procedures in \cite[(3.14)-(3.15)]{ZheLiQiu} to reformulate
\begin{equation*}
\begin{split}
 &\int_0^{t} E^\prime(t-s)  \p_s\big ( \zeta(k* \Delta^2 u)(\bm x, s)\big) ds \\
 & = \int_0^t \left[\frac{1}{2\pi\rm i} \int_{\Gamma_\theta}z^{1-\varepsilon} (z- \mu\Delta)^{-1} e^{z(t-s)} d z\right]\big(\partial_s I^{(1-\varepsilon)} \p_s(\zeta(k* \Delta^2 u) (\bm x, s)\big)ds.
\end{split}
\end{equation*}
We invoke (\ref{GammaEstimate}) and \eqref{eq4.3} to bound the integral in the square brackets and the first term on the right-hand side in \eqref{wtt:e3}, respectively, to obtain
\begin{equation}\label{wtt:E4}
\begin{split}
& \bigg \| \int_0^{t} E^\prime(t-s)  \p_s\big (\zeta(k* \Delta^2 u) (\bm x, s)\big) ds\bigg\|_{L^2}\\
&\quad \leq Q\int_0^t \f{\big \|\partial_s I^{(1-\varepsilon)} \p_s(\zeta(k* \Delta^2 u)(\bm x,s)) \big \|_{L^2} ds}{(t-s)^{1-\varepsilon}}\\[0.1in]
&\quad\leq Q\int_0^t \f1{(t-s)^{1-\varepsilon}} \Big(\int_0^s \frac{\|\partial_{\theta} \Delta^2 u(\cdot,\theta)\| _{L^2} }{(s-\theta)^{\varepsilon}} d\theta + \|\Delta^2 u_0\| s^{-\varepsilon}\Big) d s\\[0.15in]
&\quad\leq Q\int_0^t  \|\partial_{\theta} \Delta^2 u(\cdot,\theta)\| _{L^2} d\theta + Q \|\Delta^2 u_0\|,
\end{split}
\end{equation}
where we swapped the order of the integral in the last inequality.

Fixing $1 \le p < \infty$, and then we can select $ 0 < \varepsilon \ll 1$ to satisfy $0 < \varepsilon p <1 $. By multiplying $\p_t^2 \Delta\Xi_2$ in \eqref{wtt:e3} by $e^{-\gamma t}$   and taking the $\|\cdot\|_{L^p(0,T)}$ norm on both sides of the resulting equation, we then incorporate \eqref{Model:e5}, \eqref{wtt:E3}, \eqref{wtt:E4}, and Young's convolution inequality to arrive at
\begin{equation}\label{wtt:L2}
\begin{split}
& \big \|e^{-\gamma t} \p_t^2 \Delta\Xi_2 \big \|_{L^p(L^2)}  \leq Q \big \|\big(e^{-\gamma t}\big)*\big(e^{-\gamma t}\|\p_t \Delta^2 u(\cdot,t)\| \big) \big \|_{L^p(0,T)} +Q\|\Delta^2 u_0\| \\
&\qquad \qquad \qquad \qquad \quad   \leq Q\gamma^{-1}\|e^{-\gamma t}\p_t \Delta^2 u\|_{L^p(L^2)}+Q \|\Delta^2 u_0\|.
\end{split}
\end{equation}
We differentiate \eqref{eq4.1} twice in time, apply the $\|\cdot\|_{L^p(0,T)}$ norm on both sides of the resulting equation, then multiply it by $e^{-\gamma t}$, and invoking \eqref{wtt:e2} and \eqref{wtt:L2} to get
\begin{equation}\label{wtt:e5}
\begin{split}
& \big \|e^{-\gamma t} \p_t^2 \Delta u \big \|_{L^p(L^2)}  \ds \leq \big \|e^{-\gamma t}\p_t^2\Delta\Xi_1 \big \|_{L^p(L^2)}
+\big \|e^{-\gamma t}\p_t^2\Delta\Xi_2 \big \|_{L^p(L^2)} \\
& \qquad\qquad\qquad\qquad \leq  Q\big( \|f\|_{W^{1,p}(\check H^2)} \!+\! \| f\|_{L^p(\check H^{4+\sigma})} \!+\!\|\Delta^2 u_0\|\!+\!\|\Delta^3 u_0\|\big)  \\
& \qquad\qquad\qquad\qquad  +  Q\gamma^{-1} \big \| e^{-\gamma t}\p_t \Delta^2 u \big \|_{L^p(L^2)}.
\end{split}
\end{equation}
We then differentiate \eqref{eq4.1} with respect to $t$,  apply the $\|\cdot\|_{L^p(0,T)}$ norm on both sides of the resulting equation multiplied by $e^{-\gamma t}$, and combine  the estimates \eqref{wtt:E3}, \eqref{wtt:e5} with Young's convolution inequality to bound
\begin{equation*}
\begin{split}
\big \|e^{-\gamma t} \p_t \Delta^2 u \big \|_{L^p(L^2)} &  \leq Q \big \| e^{-\gamma t} \big(\p_t^2 \Delta u- \p_t \Delta f -\p_t(\zeta(k* \Delta^2 u)) \big)\big \|_{L^p(L^2)}\\
&  \leq  Q\big( \|f\|_{W^{1,p}(\check H^2)} + \| f\|_{L^p(\check H^{4+\sigma})} +\| u_0\|_{\check H^6}\big)\\[0.1in]
&  + Q\gamma^{-1} \big \| e^{-\gamma t}\p_t \Delta^2 u \big \|_{L^p(L^2)}.
\end{split}
\end{equation*}
Here we set $\gamma$ large enough to eliminate the last term on the right-hand side of the above equation to obtain
\begin{equation*}
\big \|e^{-\gamma t} \p_t \Delta^2 u \big \|_{L^p(L^2)} \leq   Q\big( \|f\|_{W^{1,p}(\check H^2)} + \| f\|_{L^p(\check H^{4+\sigma})} +\| u_0\|_{\check H^6}\big),
\end{equation*}
which combined with \eqref{wtt:e5} leads to
\begin{equation}\label{wtt:e7}\begin{array}{rl}
\big \|\p_t^2 u \big \|_{L^p(\check H^2)}   \leq Q\left(\| u_0\|_{\check H^6}  + \|f\|_{W^{1,p}(\check H^2)} + \| f\|_{L^p(\check H^{4+\sigma})} \right).
\end{array}
\end{equation}
This yields \eqref{uttt:e1}.

To prove \eqref{uttt:e2}, we shall bound $\|\p_t^3 u\|_{L^p(L^2)}$. We apply the initial condition $\p_tu(\bm x, 0) = \mu \Delta u_0 + f(\bm x,0)$ from \eqref{eq1.1}, the following relation
\begin{align*}
    \p_{t}^2 \int_0^t k(s)\Delta u(\bm x,t-s) ds &= k'(t)\Delta u_0 + k(t) \Delta \p_tu(\bm x,0) + \int_0^t k(s) \p_t^2 \Delta u(\bm x,t-s)ds,
\end{align*}
and \eqref{eq1.1} to  obtain
\begin{align*}
    \p_{t}^3 u & =   \mu \p_{t}^2\Delta u +  \zeta \p_{t}^2 \int_0^t k(s)\Delta u(\bm x, t-s) ds + \p_{t}^2f(t) \\
    & =   \mu \p_{t}^2\Delta u + \p_{t}^2f(t) +  \zeta k'(t)\Delta u_0 +  \zeta k(t) [  \mu \Delta^2 u_0 + \Delta f(\bm x, 0)] \\
    & + \zeta \int_0^t k(s) \p_t^2 \Delta u(\bm x, t-s)ds.
\end{align*}
We combine  \eqref{uttt:e1} and \eqref{eq3.3} in Lemma \ref{lemma3.1}, i.e., $|k'(t)|\leq Q(1+|\ln(t)|)\leq Qt^{-\varepsilon}$, and the Sobolev embedding $W^{1, p}(0, T)  \hookrightarrow L^\infty(0, T)$ \cite{AdaFou} to obtain
\begin{align*}
    \|\p_{t}^3 u\|_{L^p(L^2)} &\leq   Q\big(\|\p_{t}^2\Delta u\|_{L^p(L^2)} + \|\p_{t}^2f(t)\|_{L^p(L^2)} + |k'(t)|\|\Delta u_0\|_{L^2}) \\
    & \quad+ Q|k(t)| \left[\|\Delta^2 u_0\|_{L^2} + \|\Delta f(\cdot,0)\|_{L^2} \right]  \\
    & \quad + Q\int_0^t |k(s)| \|\p_t^2 \Delta u(\cdot, t-s)\|_{L^p(L^2)}ds \\
    &\leq Q\big(\| u_0\|_{\check H^6}   \!+  \!\|f\|_{W^{1,p}(\check H^2)} \!+  \!\| f\|_{L^p(\check H^{4+\sigma})} + \|f\|_{W^{2,p}(L^2)} \!+ \! \|\Delta f(\cdot, 0)\|_{L^2} \big) \\
    & \qquad \qquad + Q  t^{-\varepsilon} \|\Delta u_0\|_{L^2}\\
    &  \le Q\big(\| u_0\|_{\check H^6}  + \|f\|_{W^{1,p}(\check H^2)} + \| f\|_{L^p(\check H^{4+\sigma})} + \|f\|_{W^{2,p}(L^2)} \big)\\
     & \qquad \qquad + Q  t^{-\varepsilon} \|\Delta u_0\|_{L^2},
\end{align*}
which completes the proof.
\end{proof}

\section{Second-order scheme}\label{sec5}

In this section, we propose and analyze a second-order numerical approximation to \eqref{eq1.1}-\eqref{eq1.3}.

\subsection{Temporal semi-discrete scheme}
Denote
\begin{equation*}
   0=t_0 < t_1 < \cdots < t_{N} = T, \quad N \in \mathbb{Z}^+, \quad  \tau = t_{n}-t_{n-1} = \frac{T}{N},  \quad 1\leq n \leq N.
\end{equation*}
In addition, we define
\begin{equation*}
   \delta_t V^n = \frac{V^n-V^{n-1}}{\tau}, \;  V^{n-1/2} = \frac{V^n+V^{n-1}}{2}, \;  t_{n-1/2} = \frac{t_n+t_{n-1}}{2}, \;  1\leq n \leq N.
\end{equation*}
To facilitate the analysis, we first denote
\begin{equation}\label{qwl01}
   \hat{V}^{n} =  e^{-\lambda t_n}  V^n, \quad 1\leq \lambda < \infty,
\end{equation}
and
\begin{align}\label{norms}
    \delta_t^{\lambda} \hat{V}^n : = \frac{\hat{V}^n- e^{-\lambda \tau}\hat{V}^{n-1}}{\tau} = e^{-\lambda t_n}\delta_t V^n, \quad n\geq 1.
\end{align}
We employ the linear interpolation quadrature to approximate the integral term in \eqref{eq1.1}
\begin{equation}\label{eq6.1}
  \begin{split}
      I^{(\alpha(t_n))} \varphi(t_n) & = \int_{0}^{t_n} \frac{(t_n-s)^{\alpha(t_n-s)-1}}{\Gamma(\alpha(t_n-s))} \varphi(s) ds\\ & \approx \sum_{j=1}^{n} \int_{t_{j-1}}^{t_j} \frac{(t_n-s)^{\alpha(t_n-s)-1}}{\Gamma(\alpha(t_n-s))} \mathcal{L}_1[\varphi](s) ds
      \\
      & = \sum_{j=1}^{n} \left[ a_{n,j}\varphi^j + b_{n,j}\varphi^{j-1} \right] := {II}_{n}(\varphi), \quad n\geq 1,
  \end{split}
\end{equation}
where
\begin{equation}\label{L1I}
  \begin{split}
      \varphi^j = \varphi(t_j), \quad \mathcal{L}_1[\varphi](s) =  \frac{s-t_{j-1}}{\tau}\varphi^j + \frac{t_j-s}{\tau}\varphi^{j-1},
  \end{split}
\end{equation}
and
\begin{equation}\label{eq6.2}
  \begin{split}
      & a_{n,j} = \int_{t_{j-1}}^{t_j}   \frac{(t_n-s)^{\alpha(t_n-s)-1}}{\Gamma(\alpha(t_n-s))} \frac{s-t_{j-1}}{\tau} ds > 0, \\
       & b_{n,j} = \int_{t_{j-1}}^{t_j}   \frac{(t_n-s)^{\alpha(t_n-s)-1}}{\Gamma(\alpha(t_n-s))} \frac{t_{j}-s}{\tau} ds > 0.
  \end{split}
\end{equation}
Then \eqref{eq1.1} implies
\begin{equation}\label{eq6.3}
  \begin{split}
      & \frac{1}{2}[I^{(\alpha(t_n))} \varphi(t_n)+I^{(\alpha(t_{n-1}))} \varphi(t_{n-1})] \approx \frac{1}{2}\sum_{j=1}^{n} \left[ a_{n,j}\varphi^j + b_{n,j}\varphi^{j-1} \right] \\
      & \qquad\qquad\qquad\quad + \frac{1}{2}\sum_{j=1}^{n-1} \left[ a_{n-1,j}\varphi^j + b_{n-1,j}\varphi^{j-1} \right],  \quad n\geq 1,
  \end{split}
\end{equation}
and we use the fact that $a_{n-1,j}=a_{n,j+1}$ and $b_{n-1,j}=b_{n,j+1}$ to get
\begin{align}
   & \frac{1}{2}[I^{(\alpha(t_n))} \varphi(t_n)+I^{(\alpha(t_{n-1}))} \varphi(t_{n-1})] \approx \sum_{j=1}^{n} a_{n,j} \frac{\varphi^j+\varphi^{j-1}}{2} \nonumber \\
      &  + \sum_{j=1}^{n} b_{n,j} \frac{\varphi^{j-1}+\varphi^{j-2}}{2} - a_{n,1}\frac{\varphi^0}{2} := II_{n-1/2}(\varphi), \quad \varphi^{-1}=0=II_0(\varphi). \label{eq6.4}
\end{align}
Now, we consider \eqref{eq1.1} at the point $t=t_n$ and apply Crank-Nicolson method to get for $u^n:=u(\bm x,t_n)$
  \begin{align}
     & \delta_t u^n -  \mu\Delta u^{n-1/2} - \zeta II_{n-1/2}(\Delta u) = f^{n-1/2} + \mathcal{R}^{n}, \quad 1\leq n \leq N, \label{eq6.5} \\
     & u^0 = u_0, \label{eq6.6}
  \end{align}
where $\mathcal{R}^{n}=\mathcal{R}_1^{n}+\mathcal{R}_2^{n-1/2}$, and
  \begin{align}
     & \mathcal{R}_1^{n} = \left[\delta_t u^n - \p_t u(t_{n-1/2}) \right] + \left[\p_t u(t_{n-1/2}) - (\p_t u)^{n-1/2} \right], \quad n\geq 1,  \label{qqq01}  \\
     & \mathcal{R}_2^{n} = \zeta( II_{n}(\Delta u)- I^{(\alpha(t_n))} \Delta u(t_n)), \quad n\geq 1, \quad  \mathcal{R}_2^{0} = 0. \label{qqq02}
  \end{align}
By omitting the small term $\mathcal{R}^n$ in \eqref{eq6.5} and replacing $U^n$ as approximate solutions of $u^n$, we obtain Crank-Nicolson scheme as
  \begin{align}
     & \delta_t U^n -  \mu\Delta U^{n-1/2} - \zeta II_{n-1/2}(\Delta U) = f^{n-1/2}, \quad 1\leq n \leq N, \label{eq6.8} \\
     & U^0 = u_0. \label{eq6.9}
  \end{align}
  Then, we use the notation of \eqref{qwl01} and \eqref{norms} and multiply both sides of \eqref{eq6.8} by $e^{-\lambda t_n}$ to get
\begin{align}
   \delta_t^{\lambda} \hat{U}^n  & - \mu\Lambda_{\lambda}(\Delta \hat{U}^{n}) -\zeta \sum\limits_{j=1}^{n} \tilde{a}_{n,j}  \Lambda_{\lambda}(\Delta \hat{U}^j) - \zeta\sum\limits_{j=2}^{n} \tilde{b}_{n,j}  \Lambda_{\lambda}(\Delta \hat{U}^{j-1}) \nonumber \\
   & + \zeta(a_{n,1}-b_{n,1})e^{-\lambda t_{n}} \Lambda_{\lambda}(\Delta \hat{U}^0)  = \Lambda_{\lambda}(\hat{f}^n),  \quad 1\leq n \leq N, \label{qwlB}
\end{align}
where
\begin{equation}\label{qqqq}
   \tilde{a}_{n,j} = a_{n,j} e^{-\lambda t_{n-j}}, \quad \tilde{b}_{n,j} = b_{n,j} e^{-\lambda t_{n+1-j}},  \quad \Lambda_{\lambda}(\hat{V}^n) = \frac{\hat{V}^n+e^{-\lambda \tau}\hat{V}^{n-1}}{2}.
\end{equation}

\subsection{Stability and error estimate}
Next, we shall deduce the following stability result.
\begin{theorem}\label{thm7}
   Let $U^m$ be the solution of the Crank-Nicolson scheme \eqref{eq6.8}-\eqref{eq6.9}. Then we have
   \begin{equation*}
      \|U^m\|^2  \leq  Q \left( \|U^0\|^2 + \tau\|\nabla U^0\|^2  + \tau \sum_{n=1}^{m}\|f^{n-1/2}\|^2 \right), \quad 1\leq m \leq N.
   \end{equation*}
\end{theorem}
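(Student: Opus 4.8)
The plan is to run a discrete energy argument on the exponentially weighted scheme \eqref{qwlB}, testing against the natural discrete analogue of $u$ itself, namely $\Lambda_\lambda(\hat U^n)$ (equivalently, multiplying by $2\tau\,\Lambda_\lambda(\hat U^n)$ and summing in $n$ from $1$ to $m$). The point of working with $\hat U^n=e^{-\lambda t_n}U^n$ and the weighted difference quotient $\delta_t^\lambda$ is that the kernel coefficients get replaced by the damped coefficients $\tilde a_{n,j},\tilde b_{n,j}$ in \eqref{qqqq}, whose row sums $\sum_j(\tilde a_{n,j}+\tilde b_{n,j})$ can be made arbitrarily small by choosing $\lambda$ large — this is exactly the device that compensates for the loss of positivity/monotonicity of $k$. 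First I would record the telescoping identity $2\tau\sum_{n=1}^m \delta_t^\lambda\hat U^n\,\Lambda_\lambda(\hat U^n) \;=\; \sum_{n=1}^m\big(\|\hat U^n\|^2-e^{-2\lambda\tau}\|\hat U^{n-1}\|^2\big)$, which is nonnegative term-by-term and, after a further telescoping, bounds $\|\hat U^m\|^2$ from below by (a positive multiple of) itself minus $\|\hat U^0\|^2$; equivalently it controls $\|\hat U^m\|^2$ plus a sum of the increments. Similarly the viscosity term gives, after integration by parts in space (recalling the zero boundary condition so $-(\Delta\hat U^{n-1/2},\cdot)=(\nabla\hat U^{n-1/2},\nabla\cdot)$), a nonnegative contribution $\mu\tau\sum_n\|\nabla\Lambda_\lambda(\hat U^n)\|^2$ plus a telescoped $\|\nabla\hat U^m\|^2$-type term — I would keep the full $\tau\sum_n\|\nabla\Lambda_\lambda(\hat U^n)\|^2$ on the left as a reservoir to absorb the memory terms.

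Next I would handle the three kernel sums in \eqref{qwlB}. Each, after testing, produces a double sum of the form $\zeta\tau\sum_{n=1}^m\sum_j \tilde c_{n,j}\,(\nabla\Lambda_\lambda(\Delta^{-?}\hat U^{\cdot}),\nabla\Lambda_\lambda(\hat U^n))$ — more precisely, after moving one $\Delta$ onto the test function we get inner products of $\nabla\Lambda_\lambda(\hat U^{j})$ (or $\hat U^{j-1}$, $\hat U^0$) against $\nabla\Lambda_\lambda(\hat U^n)$. Using Cauchy–Schwarz in space and then the Cauchy–Schwarz/Young inequality across the sum, together with the uniform bound $\sum_{j}(\tilde a_{n,j}+\tilde b_{n,j})\le C/\lambda$ coming from $|k|\le Q$ (Lemma \ref{lemma3.1}), the geometric decay $e^{-\lambda t_{n-j}}$, and $\alpha_*>0$, I would bound these memory contributions by $\tfrac{\mu}{2}\tau\sum_{n=1}^m\|\nabla\Lambda_\lambda(\hat U^n)\|^2 + \tfrac{Q}{\lambda}\tau\sum_{n=1}^m\|\nabla\Lambda_\lambda(\hat U^n)\|^2 + Q\tau\sum_n\|\nabla\hat U^0\|^2$, where the $\Delta\hat U^0$ term is rewritten as a $\nabla\hat U^0$ (or $U^0$) term via the boundary condition. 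Choosing $\lambda$ large enough that $\tfrac{Q}{\lambda}\le \tfrac{\mu}{4}$ lets all of this be absorbed into the left-hand viscosity reservoir. The source term $\Lambda_\lambda(\hat f^n)$ is trivial: Cauchy–Schwarz and Young give $\tau\sum_n(\hat f^n,\Lambda_\lambda(\hat U^n))\le \epsilon\tau\sum_n\|\hat U^n\|^2 + Q_\epsilon\tau\sum_n\|\hat f^{n-1/2}\|^2$, and the first piece is absorbed by the $\|\hat U^n\|^2$ terms on the left (with $\lambda$, now fixed, also chosen to dominate $\epsilon$), while $\|\hat f^{n-1/2}\|\le\|f^{n-1/2}\|$.

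Assembling, I obtain $\|\hat U^m\|^2 + \tau\|\nabla\hat U^m\|^2 \le Q\big(\|U^0\|^2+\tau\|\nabla U^0\|^2 + \tau\sum_{n=1}^m\|f^{n-1/2}\|^2\big)$; undoing the weight, $\|U^m\|^2=e^{2\lambda t_m}\|\hat U^m\|^2\le e^{2\lambda T}\|\hat U^m\|^2$, and since $\lambda=\lambda(\mu,\zeta,\alpha_*,T,Q)$ is a fixed constant, $e^{2\lambda T}$ is absorbed into $Q$, which yields the claimed estimate. The main obstacle I anticipate is the bookkeeping in the kernel sums: one must be careful that the shifted indices in $\tilde b_{n,j}$ and the separated boundary term $(a_{n,1}-b_{n,1})e^{-\lambda t_n}\Lambda_\lambda(\Delta\hat U^0)$ are handled consistently (e.g.\ a discrete Young's convolution inequality applied to $\tau\sum_n\sum_j \tilde c_{n,j}\,x_j y_n$ with $\sum_j|\tilde c_{n,j}|$ small uniformly in $n$), and that the constant hidden in $\sum_j(\tilde a_{n,j}+\tilde b_{n,j})\le Q/\lambda$ is genuinely independent of $n$, $\tau$, and $N$ — this is where $|k|\le Q$ from Lemma \ref{lemma3.1} and the exponential damping are essential. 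Everything else is standard discrete energy manipulation.
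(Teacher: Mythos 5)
Your proposal follows essentially the same route as the paper: test \eqref{qwlB} with $\tau\Lambda_\lambda(\hat U^n)$, telescope the weighted difference quotient, bound the row sums of the damped kernel weights $\tilde a_{n,j},\tilde b_{n,j}$ by a quantity that vanishes as $\lambda\to\infty$ (the paper gets $Q\lambda^{-\alpha_*}$ via $a_{n,j}\le Q\tau(t_n-t_j)^{\alpha_*-1}$ and \eqref{Model:e5}), and absorb the memory terms into the $\mu\tau\sum_n\|\Lambda_\lambda(\nabla\hat U^n)\|^2$ reservoir, finishing the source term with (in the paper) a discrete Gr\"onwall step. The one place your sketch is loose is the $\hat U^0$ contribution: written as $Q\tau\sum_{n=1}^m\|\nabla\hat U^0\|^2$ it would give $QT\|\nabla U^0\|^2$ rather than the stated $Q\tau\|\nabla U^0\|^2$; to get the latter you need, as in the paper's bound for $\Phi_3$, that $|a_{n,1}-b_{n,1}|e^{-\lambda t_n}\le Q\tau t_{n-1}^{\alpha_*-1}e^{-\lambda t_{n-1}}$ is summable over $n$ with an $O(1)$ total, so that the outer factor $\tau$ survives.
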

\begin{proof}
   We first take the inner product of \eqref{qwlB} with $\tau \Lambda_{\lambda}(\hat{U}^n)$ and then sum for $n$ from 1 to $m$ to get
   \begin{equation*}
     \begin{split}
         & \tau\sum_{n=1}^{m}   (  \delta_t^{\lambda}\hat{U}^n, \Lambda_{\lambda}(\hat{U}^n) ) + \mu  \tau\sum_{n=1}^{m}( \Lambda_{\lambda}(\nabla \hat{U}^{n}), \Lambda_{\lambda}(\nabla \hat{U}^{n}))   \\
         &  + \zeta\tau\sum_{n=1}^{m}  \tilde{a}_{n,n} ( \Lambda_{\lambda}(\nabla \hat{U}^n), \Lambda_{\lambda}(\nabla \hat{U}^n) ) = - \zeta\tau\sum_{n=2}^{m} \sum\limits_{j=1}^{n-1}\tilde{a}_{n,j} ( \Lambda_{\lambda}(\nabla \hat{U}^j), \Lambda_{\lambda}(\nabla \hat{U}^n) ) \\
         & - \zeta\tau\sum_{n=2}^{m} \sum\limits_{j=2}^{n}\tilde{b}_{n,j} ( \Lambda_{\lambda}(\nabla \hat{U}^{j-1}), \Lambda_{\lambda}(\nabla \hat{U}^n) )  + \tau\sum_{n=1}^{m} (\Lambda_{\lambda}(\hat{f}^n), \Lambda_{\lambda}( \hat{U}^n)) \\
         & + \zeta\tau\sum_{n=1}^{m} (a_{n,1}-b_{n,1})e^{-\lambda t_{n}} (\Lambda_{\lambda}(\nabla \hat{U}^0),\Lambda_{\lambda}(\nabla \hat{U}^n)): = -\zeta\Phi_1 - \zeta\Phi_2 + \Phi_4 + \zeta\Phi_3,
     \end{split}
   \end{equation*}
which follows from $a_{n,n}\geq 0$ and
   \begin{equation}\label{eq6.12}
     \begin{split}
           \tau\sum_{n=1}^{m} &  (  \delta_t^{\lambda}\hat{U}^n, \Lambda_{\lambda}(\hat{U}^n) ) = \sum_{n=1}^{m}\frac{\|\hat{U}^n\|^2-\|e^{-\lambda \tau}\hat{U}^{n-1}\|^2}{2} \\
           & \qquad \geq \sum_{n=1}^{m}\frac{\|\hat{U}^n\|^2-\|\hat{U}^{n-1}\|^2}{2} = \frac{\|\hat{U}^m\|^2-\|\hat{U}^{0}\|^2}{2}
     \end{split}
   \end{equation}
that
\begin{align}
    \frac{\|\hat{U}^m\|^2-\|\hat{U}^{0}\|^2}{2}  +  \mu\tau\sum_{n=1}^{m} \| \Lambda_{\lambda}(\nabla \hat{U}^{n})\|^2 \leq  \zeta \sum_{j=1}^{3} |\Phi_j| + |\Phi_4|. \label{eq6.13}
\end{align}
Below we will further estimate the terms of the right-hand side of \eqref{eq6.13}. First, we apply Cauchy-Schwarz inequality and $ab\leq \frac{a^2+b^2}{2}$ to obtain
\begin{align*}
   |\Phi_1| & \leq \tau\sum_{n=2}^{m} \sum\limits_{j=1}^{n-1}\tilde{a}_{n,j} \| \Lambda_{\lambda}(\nabla \hat{U}^j) \|  \| \Lambda_{\lambda}(\nabla \hat{U}^n) \| \nonumber \\
   & \leq \tau\sum_{n=2}^{m} \sum\limits_{j=1}^{n-1}\tilde{a}_{n,j} \frac{\| \Lambda_{\lambda}(\nabla \hat{U}^j) \|^2 + \| \Lambda_{\lambda}(\nabla \hat{U}^n) \|^2}{2} \nonumber \\
   & = \tau\sum_{n=2}^{m} \sum\limits_{j=1}^{n-1}\tilde{a}_{n,j}
  \frac{\| \Lambda_{\lambda}(\nabla \hat{U}^j) \|^2 }{2} + \tau\sum_{n=2}^{m}  \frac{\| \Lambda_{\lambda}(\nabla \hat{U}^n) \|^2 }{2} \sum\limits_{j=1}^{n-1}\tilde{a}_{n,j}
  \nonumber \\
  & = \tau\sum_{j=1}^{m-1} \frac{\| \Lambda_{\lambda}(\nabla \hat{U}^j) \|^2 }{2}\sum\limits_{n=j+1}^{m}\tilde{a}_{n,j}
   + \tau\sum_{n=2}^{m}  \frac{\| \Lambda_{\lambda}(\nabla \hat{U}^n) \|^2 }{2} \sum\limits_{j=1}^{n-1}\tilde{a}_{n,j},
\end{align*}
where
\begin{align*}
   & \sum\limits_{n=j+1}^{m}\tilde{a}_{n,j} \leq Q \sum_{n=j+1}^{m} e^{-\lambda t_{n-j}} \tau (t_n-t_j)^{\alpha_* -1} = Q \tau\sum_{p=1}^{m-j} e^{-\lambda t_{p}}  (t_p)^{\alpha_* -1} \\
   & \qquad\qquad\;\; \leq Q \int_{0}^{t_{m-j}} e^{-\lambda s} s^{\alpha_* -1} ds \leq Q\int_{0}^{\infty} e^{-\lambda s} s^{\alpha_* -1} ds \leq Q \lambda^{-\alpha_*}, \\
   & \sum\limits_{j=1}^{n-1}\tilde{a}_{n,j} \leq  Q \sum_{p=1}^{n-1} e^{-\lambda t_{p}} \tau (t_p)^{\alpha_* -1} \leq Q\int_{0}^{\infty} e^{-\lambda s} s^{\alpha_* -1} ds \leq Q \lambda^{-\alpha_*}.
\end{align*}
Thus, we get
\begin{align}
     |\Phi_1| & \leq (Q \lambda^{-\alpha_*}) \tau\sum_{n=1}^{m}  \| \Lambda_{\lambda}(\nabla \hat{U}^n) \|^2. \label{eq6.14}
\end{align}
Then, similar to the analysis of \eqref{eq6.14}, we use $\tilde{b}_{n,n}\leq Q \int_{t_{n-1}}^{t_n}(t_n-s)^{\alpha_* -1}ds\leq Q\tau^{\alpha_*}$ to get
\begin{align}
     |\Phi_2| & \leq Q \tau\sum_{n=2}^{m} \tilde{b}_{n,n} \| \Lambda_{\lambda}(\nabla \hat{U}^{n-1}) \| \| \Lambda_{\lambda}(\nabla \hat{U}^n) \| \nonumber \\
     & + \tau\sum_{n=2}^{m} \sum_{j=2}^{n-1} \tilde{b}_{n,j} \| \Lambda_{\lambda}(\nabla \hat{U}^{j-1}) \| \| \Lambda_{\lambda}(\nabla \hat{U}^n) \| \nonumber \\
     & \leq Q\left( \tau^{\alpha_*}+\lambda^{-\alpha_*} \right) \tau\sum_{n=1}^{m}  \| \Lambda_{\lambda}(\nabla \hat{U}^n) \|^2. \label{eq6.15}
\end{align}
Next, we discuss the term $|\Phi_3|$. We utilize
\begin{align*}
     & |a_{1,1}-b_{1,1}| \leq Q  \tau^{\alpha_*}, \quad  |a_{n,1}-b_{n,1}| \leq Q  \tau t_{n-1}^{\alpha_* -1} \leq Q  \tau^{\alpha_*}, \quad n \geq 2,
\end{align*}
and Cauchy-Schwarz inequality to get
 \begin{align}
     |\Phi_3| & \leq (Q \tau^{\alpha_*}) \tau\sum_{n=1}^{m}  \| \Lambda_{\lambda}(\nabla \hat{U}^n) \|^2 + Q \tau^{\alpha_* +1} \| \Lambda_{\lambda}(\nabla \hat{U}^0) \|^2 \nonumber \\
     & + \tau\sum_{n=2}^{m}Q  \tau t_{n-1}^{\alpha_* -1} e^{-\lambda t_{n-1}} \| \Lambda_{\lambda}(\nabla \hat{U}^0) \|^2 \nonumber \\
      & \leq (Q \tau^{\alpha_*}) \tau\sum_{n=1}^{m}  \| \Lambda_{\lambda}(\nabla \hat{U}^n) \|^2 + Q \tau \| \Lambda_{\lambda}(\nabla \hat{U}^0) \|^2.  \label{eq6.16}
\end{align}
Finally, we bound $|\Phi_4|$. It is easy to obtain with $\Lambda_{\lambda}(\hat{V}^n)=e^{-\lambda t_n}V^{n-1/2}$,
 \begin{align}
     |\Phi_4| & \leq Q \tau\sum_{n=1}^{m}  \| \Lambda_{\lambda}( \hat{U}^n) \|   \left \| \Lambda_{\lambda}(\hat{f}^n) \right\| \leq Q \tau\sum_{n=1}^{m}  \| U^{n-1/2} \|   \left \|f^{n-1/2} \right\|.    \label{eq6.17}
\end{align}
We put \eqref{eq6.14}--\eqref{eq6.17} into the right-hand side of \eqref{eq6.13}:
\begin{align*}
        \frac{\|\hat{U}^m\|^2}{2} & +\mu   \tau\sum_{n=1}^{m} \| \Lambda_{\lambda}(\nabla \hat{U}^{n})\|^2  \leq  Q\zeta\left( \tau^{\alpha_*}+\lambda^{-\alpha_*} \right) \tau\sum_{n=1}^{m}  \| \Lambda_{\lambda}(\nabla \hat{U}^n) \|^2 \nonumber \\
        & + Q \zeta\tau \| \Lambda_{\lambda}(\nabla \hat{U}^0) \|^2 +  Q \tau\sum_{n=1}^{m}  \| U^{n-1/2} \|   \left \|f^{n-1/2} \right\| + \frac{\|\hat{U}^0\|^2}{2},
\end{align*}
in which, taking suitable large $\lambda$ and small $\tau$ so that $Q\zeta\left( \tau^{\alpha_*}+\lambda^{-\alpha_*} \right) \leq \mu$, we get
\begin{align}
        \frac{\|\hat{U}^m\|^2}{2} & \leq   Q \tau \| \Lambda_{\lambda}(\nabla \hat{U}^0) \|^2  + \frac{\|\hat{U}^0\|^2}{2}+  Q \tau\sum_{n=1}^{m}  \| U^{n-1/2} \|   \left \|f^{n-1/2} \right\|, \label{eq6.18}
\end{align}
which combines the estimate
\begin{align*}
   Q \tau\sum_{n=1}^{m}  \| U^{n-1/2} \|   \left \|f^{n-\frac{1}{2}} \right\| \leq Q\tau\|U^m\|^2  + Q \tau\sum_{n=0}^{m-1} \|U^n\|^2 +  Q \tau\sum_{n=1}^{m}  \left \|f^{n-\frac{1}{2}} \right\| ^2,
\end{align*}
$\frac{\|\hat{U}^m\|^2}{2} = \frac{e^{-2\lambda t_m}\|U^m\|^2}{2}$, and $2Q e^{2\lambda t_m} \tau \leq \frac{1}{4}$ yields
\begin{align*}
        \frac{\|U^m\|^2}{4} & \leq   Q \tau \|\nabla U^0 \|^2  + \frac{\|U^0\|^2}{2} +  Q \tau\sum_{n=1}^{m}  \left \|f^{n-\frac{1}{2}} \right\| ^2 +  Q \tau\sum_{n=0}^{m-1} \|U^n\|^2 .
\end{align*}
This finishes the proof by the discrete Gr\"{o}nwall's inequality.
\end{proof}

\vskip 1mm
In what follows, we shall deduce the convergence of the Crank-Nicolson scheme \eqref{eq6.8}-\eqref{eq6.9}. By subtracting \eqref{eq6.8}-\eqref{eq6.9} from \eqref{eq6.5}-\eqref{eq6.6}, we arrive at the following error equations
  \begin{align}
     & \delta_t \rho^n -  \mu\Delta \rho^{n-1/2} - \zeta II_{n-1/2}(\Delta \rho) = \mathcal{R}^{n}, \quad 1\leq n \leq N, \label{eq6.19} \\
     & \rho^0 = 0. \label{eq6.20}
  \end{align}
Then, we use \eqref{qwlB} to get for $1\leq n \leq N$
\begin{align}
   \delta_t^{\lambda} \hat{\rho}^n  & - \mu\Lambda_{\lambda}(\Delta \hat{\rho}^{n}) - \zeta \sum\limits_{j=1}^{n} \tilde{a}_{n,j}  \Lambda_{\lambda}(\Delta \hat{\rho}^j) - \zeta \sum\limits_{j=2}^{n} \tilde{b}_{n,j}  \Lambda_{\lambda}(\Delta \hat{\rho}^{j-1})   = \hat{\mathcal{R}}^n. \label{eq6.21}
\end{align}

\begin{theorem}\label{thm8} Under the conditions of Theorem \ref{thm:uttt}, the following error estimate holds with $T<\infty$
   \begin{equation*}
      \|u^m-U^m\| \leq Q\tau^2, \quad 1\leq m \leq N.
   \end{equation*}
\end{theorem}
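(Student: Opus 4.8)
The plan is to bound the error $\|u^m - U^m\| = \|\rho^m\|$ by applying an energy argument to the error equation \eqref{eq6.21} that runs in close parallel to the stability proof of Theorem \ref{thm7}, but now with the consistency remainder $\hat{\mathcal{R}}^n = \hat{\mathcal{R}}_1^n + \hat{\mathcal{R}}_2^{n-1/2}$ on the right-hand side in place of the source term. Concretely, I would take the inner product of \eqref{eq6.21} with $\tau\Lambda_\lambda(\hat\rho^n)$, sum over $n$ from $1$ to $m$, and use the telescoping identity \eqref{eq6.12} together with the positivity $a_{n,n}\ge 0$ to extract $\tfrac12\|\hat\rho^m\|^2$ (recall $\hat\rho^0 = 0$) plus the nonnegative term $\mu\tau\sum_n\|\Lambda_\lambda(\nabla\hat\rho^n)\|^2$ on the left. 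The convolution-type terms $\Phi_1,\Phi_2$ involving $\tilde a_{n,j}$ and $\tilde b_{n,j}$ are absorbed exactly as in \eqref{eq6.14}--\eqref{eq6.15} by choosing $\lambda$ large and $\tau$ small so that $Q\zeta(\tau^{\alpha_*}+\lambda^{-\alpha_*})\le\mu$, and the term with $a_{n,1}-b_{n,1}$ vanishes since $\rho^0 = 0$. This leaves
\begin{equation*}
\frac{\|\hat\rho^m\|^2}{2} \le Q\tau\sum_{n=1}^m \|\Lambda_\lambda(\hat\rho^n)\|\,\|\hat{\mathcal R}^n\| \le Q\tau\sum_{n=1}^m \|\rho^{n-1/2}\|\,\|\mathcal R^n\|,
\end{equation*}
after which, as in the passage from \eqref{eq6.18} to the end of Theorem \ref{thm7}, I would undo the exponential weight, use $2Qe^{2\lambda t_m}\tau\le\tfrac14$ and the discrete Gr\"onwall inequality to get $\|\rho^m\|\le Q\big(\tau\sum_{n=1}^m\|\mathcal R^n\|^2\big)^{1/2}$, or more simply $\|\rho^m\| \le Q\max_n\|\mathcal R^n\|$ up to the $T$-dependent factor.

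The substantive work is then the consistency analysis: showing $\|\mathcal R^n\| = \|\mathcal R_1^n + \mathcal R_2^{n-1/2}\| \le Q\tau^2$. For $\mathcal R_1^n$ in \eqref{qqq01}, the two bracketed terms are the standard Crank-Nicolson remainders --- a Taylor/integral-remainder argument gives $\|\delta_t u^n - \p_t u(t_{n-1/2})\| \le Q\tau^{3/2}\big(\int_{t_{n-1}}^{t_n}\|\p_t^3 u\|^2\,ds\big)^{1/2}$ and likewise $\|\p_t u(t_{n-1/2}) - (\p_t u)^{n-1/2}\|\le Q\tau^{3/2}\big(\int_{t_{n-1}}^{t_n}\|\p_t^3 u\|^2\big)^{1/2}$, so that $\tau\sum_n\|\mathcal R_1^n\|^2 \le Q\tau^4\|\p_t^3 u\|_{L^2(L^2)}^2$; here I invoke the regularity estimate \eqref{uttt:e2} of Theorem \ref{thm:uttt} to control $\|\p_t^3 u\|_{L^p(L^2)}$ (with $p=2$), noting the mild $t^{-\varepsilon}\|\Delta u_0\|$ term is $L^2$-in-time since $\varepsilon p<1$. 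For $\mathcal R_2^{n-1/2} = \tfrac12(\mathcal R_2^n + \mathcal R_2^{n-1})$ with $\mathcal R_2^n$ the linear-interpolation quadrature error \eqref{qqq02} for $\int_0^{t_n} k(t_n-s)\Delta u(s)\,ds$, I would split the sum over subintervals $[t_{j-1},t_j]$: on the interior intervals the kernel $k$ is smooth (Lemma \ref{lemma3.1} gives $|k''|\le Qt^{-1}$, hence integrable against the quadratic interpolation remainder), contributing $O(\tau^2)$ after summation, while the endpoint interval $[t_{n-1},t_n]$ needs separate treatment because $k'$ has a logarithmic blow-up and $k''$ a $t^{-1}$ singularity at $0$ --- there one estimates the interpolation error directly using $|k(t_n-s)|\le Q$ and the regularity of $\Delta u$ (controlled by $\|u\|_{W^{2,p}(\check H^2)}$ from \eqref{uttt:e1}), again yielding an $O(\tau^2)$ bound. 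Summing the two contributions gives $\|\mathcal R_2^n\|\le Q\tau^2$ uniformly in $n$.

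I expect the main obstacle to be handling the endpoint/near-diagonal term of the quadrature consistency error $\mathcal R_2^n$: the weakly singular, non-monotone kernel $k$ makes the usual "bound $k''$ and integrate" estimate fail on $[t_{n-1},t_n]$, so one must carefully split off this interval, use $\alpha(0)=1$ and the bounds $|k(t)|\le Q$, $|k'(t)|\le Q(1+|\ln t|)$ from Lemma \ref{lemma3.1}, and trade one power of $\tau$ against the log via $|\ln\tau|\cdot\tau^2 \lesssim \tau^{2-\varepsilon}$ --- or, better, exploit that the coefficients $a_{n,j},b_{n,j}$ in \eqref{eq6.2} are themselves exact integrals of $k$ so that no pointwise derivative bound on $k$ near $0$ is actually needed, only Taylor expansion of $\Delta u$ and the uniform integrability $\int_{t_{n-1}}^{t_n}|k(t_n-s)|\,ds \le Q\tau$. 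A secondary technical point is making sure the generic constant $Q$ in the Gr\"onwall step does not blow up: because the exponential-weight trick forces $e^{2\lambda t_m}\tau$ to be small, the final constant depends on $T$ (through $e^{2\lambda T}$) but stays finite for $T<\infty$, which is exactly the hypothesis in the statement. Apart from these points the argument is a routine --- if lengthy --- combination of the stability estimate already proved and the regularity bounds of Theorem \ref{thm:uttt}.
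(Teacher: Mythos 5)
Your proposal is correct in substance and follows the paper's overall strategy: an energy argument on the error equation \eqref{eq6.21} that mirrors the stability proof of Theorem \ref{thm7}, followed by consistency bounds drawn from the regularity of Theorem \ref{thm:uttt}. The one genuine difference is how the energy estimate is closed. The paper does not use Gr\"{o}nwall: from $\|\rho^m\|^2\le Q\tau\sum_{n}\|\rho^{n-1/2}\|\,\|\mathcal R^n\|$ it picks the index $m_*$ maximizing $\|\rho^n\|$ and cancels one factor of $\|\rho^{m_*}\|$, yielding the $\ell^1$-in-time bound $\|\rho^m\|\le Q\tau\sum_n\|\mathcal R^n\|$ as in \eqref{eq6.22}. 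This matters at the first time step: the paper bounds $\|\mathcal R_1^1\|\le 2\int_0^\tau\|\p_t^2u\|\,dt\le Q\tau$ using only second-derivative regularity, and the $\ell^1$ accumulation turns this into $\tau\cdot Q\tau=Q\tau^2$. Your $\ell^2$ accumulation $\big(\tau\sum_n\|\mathcal R^n\|^2\big)^{1/2}$ would extract only $Q\tau^{3/2}$ from that crude first-step bound, so you must use the third-derivative Taylor remainder on $[0,\tau]$ as well and rely on $t^{-\varepsilon}\in L^2$ near $0$ (which does work for $\varepsilon<1/2$); your ``more simply $\|\rho^m\|\le Q\max_n\|\mathcal R^n\|$'' fallback is genuinely lossy, since even for $n\ge 2$ one only gets $\max_n\|\mathcal R_1^n\|=O(\tau^{2-\varepsilon})$ and the first step contributes $O(\tau)$ under the second-derivative bound. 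Finally, your concern about interpolating the singular kernel and splitting off the endpoint interval is a red herring: \eqref{eq6.1} is a product-integration rule in which only $\Delta u$ is interpolated while $k$ is integrated exactly inside $a_{n,j},b_{n,j}$, so the paper needs nothing beyond $|k(t_n-s)|\le Q(t_n-s)^{\alpha_*-1}$ and the $O(\tau^2)$ interpolation remainder of $\Delta u$; the ``better'' option you mention only at the end is in fact the paper's (and the only workable) argument, and no bound on $k'$ or $k''$ enters the quadrature consistency at all.
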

\begin{proof}
    Based on the analysis of Theorem \ref{thm7} and \eqref{eq6.18}, we use \eqref{eq6.21} to get
\begin{align*}
        \frac{\|\hat{\rho}^m\|^2}{2} & \leq   Q \tau\sum_{n=1}^{m}  \| \rho^{n-1/2} \|   \left \|\hat{\mathcal{R}}^{n} \right\|\leq Q \tau\sum_{n=1}^{m}  \| \rho^{n-1/2} \|   \left \|\mathcal{R}^{n} \right\|,
\end{align*}
which further gives
\begin{align*}
       \| \rho^m \|^2 & \leq   Q \tau\sum_{n=1}^{m}  \| \rho^{n-1/2} \|  \left \|\mathcal{R}^{n} \right\|.
\end{align*}
Then, selecting a suitable $m_*$ such that $\|\rho^{m_*}\|=\max\limits_{0\leq n \leq m}\|\rho^{n}\|$, we have
\begin{align}
       \| \rho^m \| \leq \|\rho^{m_*}\|  \leq   Q \tau\sum_{n=1}^{m_*}  \left \|\mathcal{R}^{n} \right\| \leq Q \tau\sum_{n=1}^{m}  \left( \left\|\mathcal{R}_1^{n} \right\| + \left\|\mathcal{R}_2^{n-1/2} \right\| \right). \label{eq6.22}
\end{align}
Next, we will discuss the terms of the right-hand side of \eqref{eq6.22}. By Taylor expansion with integral remainder, \eqref{qqq01} gives
\begin{align*}
    & \left\|\mathcal{R}_1^{1} \right\| \leq 2\int_0^{\tau} \|\partial_{t}^2 u(t) \| dt, \quad  \left\|\mathcal{R}_1^{n} \right\| \leq \tau \int_{t_{n-1}}^{t_n} \|\partial_{t}^3 u(t) \| dt, \quad n\geq 2,
\end{align*}
which combines \eqref{thm:utt:e1} and \eqref{uttt:e2}, then we get
\begin{align}
       \tau\sum_{n=1}^{m} \left\|\mathcal{R}_1^{n} \right\|
       \leq Q\tau^2 + Q\tau^2 \int_{t_1}^{t_m} t^{-\varepsilon} dt \leq Q\tau^2.
       \label{eq6.23}
\end{align}
We apply the remainder of linear interpolation with $\xi_j\in (t_{j-1},t_j)$, \eqref{qqq02}, \eqref{L1I}, and \eqref{uttt:e1} to get
\begin{align*}
    \|\mathcal{R}_2^{n}\| & \leq  \zeta \sum_{j=1}^{n} \int_{t_{j-1}}^{t_j} \frac{(t_n-s)^{\alpha(t_n-s) -1} }{\Gamma(\alpha(t_n-s))} \big\|\Delta u(s) - \mathcal{L}_1[\Delta u](s) \big\| ds  \\
    & \leq Q \sum_{j=1}^{n} \int_{t_{j-1}}^{t_j} (t_n-s)^{\alpha_* -1} \left\| \frac{1}{2} \partial_t^2 \Delta u(\xi_j) (s-t_j)(s-t_{j-1})  \right\|ds  \\
    & \leq  Q \sum_{j=1}^{n} \int_{t_{j-1}}^{t_j} (t_n-s)^{\alpha_* -1} \big\|\partial_t^2 \Delta u(\xi_j)\big\| \frac{\tau^2}{2} ds \\
    & \leq Q \tau^2  \int_{0}^{t_n} (t_n-s)^{\alpha_* -1} ds.
\end{align*}
Thus we further obtain
\begin{align}
       \tau\sum_{n=1}^{m} \left\|\mathcal{R}_2^{n} \right\|
       \leq  Q\tau^3 \sum_{n=1}^{m} \int_{0}^{t_n} (t_n-s)^{\alpha_* -1} ds \leq Q\tau^2.
       \label{eq6.24}
\end{align}
By inserting \eqref{eq6.23} and \eqref{eq6.24} into \eqref{eq6.22}, the proof is completed.
\end{proof}

\subsection{Fully discrete scheme}

Next, we shall develop a fully discrete Crank-Nicolson scheme by applying the spatial finite element method. Define a quasi-uniform partition of $\Omega$ with the mesh diameter $h$. Let $S_h$ be the space of continuous piecewise linear functions on $\Omega$ with respect to the partition. Define the Ritz projector $I_h: H^1_0(\Omega)\rightarrow S_h$ by
\begin{align*}
     (\nabla (I_h w - w), \nabla \chi ) = 0 \quad \text{for} \;\; \chi \in S_h
\end{align*}
with the following approximation property  \cite{Jinbook}
\begin{align}
   \|w-I_h w\|_{L^2}\leq Qh^2 \|w(t)\|_{H^2(\Omega)}.  \label{lkx02}
\end{align}

Based on the Crank-Nicolson scheme \eqref{eq6.8}-\eqref{eq6.9}, we obtain the fully discrete Galerkin scheme:  find $\hat{U}_h\in S_h$  such that
\begin{align}
   \left(\delta_t^{\lambda} \hat{U}_h^n, \chi \right)  & + \mu \left(\Lambda_{\lambda}(\nabla \hat{U}_h^{n}), \nabla\chi \right) \nonumber \\
   & + \zeta \sum\limits_{j=1}^{n} \tilde{a}_{n,j}  \left(\Lambda_{\lambda}(\nabla \hat{U}_h^j), \nabla\chi \right) + \zeta \sum\limits_{j=2}^{n} \tilde{b}_{n,j}  \left(\Lambda_{\lambda}(\nabla \hat{U}_h^{j-1}), \nabla\chi \right) \label{eq6.25} \\
   & -\zeta  (a_{n,1}-b_{n,1})e^{-\lambda t_{n}} \left(\Lambda_{\lambda}(\nabla \hat{U}_h^0), \nabla\chi \right)  = \left(\Lambda_{\lambda}(\hat{f}^n), \chi \right), \;\;  1\leq n \leq N,  \nonumber
\end{align}
for all $\chi\in S_h$ with the suitable approximation $U_h^0\approx u_0$.

To facilitate analysis, we decompose
\begin{align*}
    u(t_n)- U_h^n = \zeta^n - \eta(t_n):=\zeta^n - \eta^n,
\end{align*}
where
\begin{align}
    \zeta^n = I_h u(t_n) - U_h^n \in S_h, \quad \eta^n = I_h u(t_n) - u(t_n). \label{bbb}
\end{align}
We apply \eqref{eq6.5}, \eqref{bbb}, and \eqref{eq6.25} to get the following error equation
\begin{align}
   \left(\delta_t^{\lambda} \hat{\zeta}^n, \chi \right)  & + \mu\left(\Lambda_{\lambda}(\nabla \hat{\zeta}^{n}), \nabla\chi \right) + \zeta \sum\limits_{j=1}^{n} \tilde{a}_{n,j}  \left(\Lambda_{\lambda}(\nabla \hat{\zeta}^j), \nabla\chi \right)
 \label{eq6.26} \\
   &  +\zeta  \sum\limits_{j=2}^{n} \tilde{b}_{n,j}  \left(\Lambda_{\lambda}(\nabla \hat{\zeta}^{j-1}), \nabla\chi \right)   = \left( \hat{\mathcal{R}}^n+\delta_t^{\lambda} \hat{\eta}^n, \chi \right), \;\;  1\leq n \leq N.  \nonumber
\end{align}

\begin{theorem}\label{thm9}
   Let $U_h^m$ be the solution of the fully discrete Crank-Nicolson Galerkin scheme \eqref{eq6.25}. Then the following stability result holds
      \begin{equation}\label{CN:stab}
      \|U_h^m\|^2 \leq Q \left(\|U_h^0\|^2 + \tau \|\nabla U_h^0\|^2 + \tau\sum_{n=1}^{m} \|f^{n-1/2}\|^2\right), \quad  1\leq m \leq N.
   \end{equation}
   In addition, under the conditions of Theorem \ref{thm:uttt}, the following error estimate holds
   \begin{equation}\label{CN:er}
      \|u^m-U_h^m\| \leq Q(\tau^2+h^2), \quad 1\leq m \leq N.
   \end{equation}
\end{theorem}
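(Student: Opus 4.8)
The plan is to prove Theorem \ref{thm9} in two parts, mirroring closely the arguments already carried out for Theorems \ref{thm7} and \ref{thm8}, since the fully discrete scheme \eqref{eq6.25} has exactly the same algebraic structure as the semidiscrete scheme \eqref{qwlB} but with all spatial operators replaced by their finite element analogues. For the stability estimate \eqref{CN:stab}, I would take $\chi = \tau\Lambda_\lambda(\hat U_h^n)$ in \eqref{eq6.25}, sum over $n$ from $1$ to $m$, and use the telescoping identity \eqref{eq6.12} together with the positivity $a_{n,n}\geq 0$ to obtain an inequality of the form \eqref{eq6.13}. Then the terms $\Phi_1,\Phi_2,\Phi_3,\Phi_4$ are bounded verbatim as in the proof of Theorem \ref{thm7}: the key point is that the estimates on $\sum_{n=j+1}^m\tilde a_{n,j}$, $\sum_{j=1}^{n-1}\tilde a_{n,j}$, $\tilde b_{n,n}$, and $|a_{n,1}-b_{n,1}|$ depend only on the coefficients $a_{n,j},b_{n,j}$ and not on the spatial discretization. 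Choosing $\lambda$ large and $\tau$ small so that $Q\zeta(\tau^{\alpha_*}+\lambda^{-\alpha_*})\leq\mu$ absorbs the gradient terms, and then applying the discrete Gr\"onwall inequality after undoing the exponential weight (using $\|\hat U_h^m\|^2=e^{-2\lambda t_m}\|U_h^m\|^2$) yields \eqref{CN:stab}.

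For the error estimate \eqref{CN:er}, I would use the splitting $u^m-U_h^m=\zeta^m-\eta^m$ from \eqref{bbb}. The projection error $\eta^m$ is controlled immediately by \eqref{lkx02}: $\|\eta^m\|\leq Qh^2\|u(t_m)\|_{H^2}\leq Qh^2$, where the bound on $\|u\|_{L^\infty(\check H^2)}$ comes from Theorem \ref{thm:utt}. For $\zeta^m$, I would apply the stability machinery to the error equation \eqref{eq6.26}, whose right-hand side is $(\hat{\mathcal R}^n+\delta_t^\lambda\hat\eta^n,\chi)$. Running the argument of Theorem \ref{thm7}/\ref{thm8} gives $\|\zeta^m\|\leq Q\tau\sum_{n=1}^m(\|\mathcal R^n\|+\|\delta_t\eta^n\|)$ after the max-over-$m_*$ trick and cancelling the exponential weights. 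The consistency term $\tau\sum_n\|\mathcal R^n\|\leq Q\tau^2$ is exactly \eqref{eq6.22}--\eqref{eq6.24} from the proof of Theorem \ref{thm8}. The new ingredient is $\tau\sum_{n=1}^m\|\delta_t\eta^n\|$: writing $\delta_t\eta^n=\tau^{-1}\int_{t_{n-1}}^{t_n}\partial_t\eta(t)\,dt=\tau^{-1}\int_{t_{n-1}}^{t_n}(I_h-I)\partial_t u(t)\,dt$, one gets $\|\delta_t\eta^n\|\leq\tau^{-1}\int_{t_{n-1}}^{t_n}Qh^2\|\partial_t u(t)\|_{H^2}\,dt$, so $\tau\sum_{n=1}^m\|\delta_t\eta^n\|\leq Qh^2\int_0^T\|\partial_t u\|_{H^2}\,dt\leq Qh^2$ by the bound on $\|u\|_{W^{1,p}(\check H^2)}$ (and $\|u\|_{W^{1,1}(\check H^2)}\le QT^{1-1/p}\|u\|_{W^{1,p}(\check H^2)}$ since $T<\infty$) from Theorem \ref{thm:utt}. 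Combining $\|\zeta^m\|\leq Q(\tau^2+h^2)$ with the projection bound gives \eqref{CN:er}.

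The main obstacle, such as it is, is bookkeeping rather than conceptual: one must verify that every coefficient estimate used in Theorem \ref{thm7} survives the finite element discretization (it does, since those estimates are purely about the temporal quadrature weights), and one must make sure the exponentially weighted energy argument still closes when the extra source term $\delta_t^\lambda\hat\eta^n$ is present — this is handled by noting it enters \eqref{eq6.26} in precisely the same position as $\hat{\mathcal R}^n$, so the $|\Phi_4|$-type bound \eqref{eq6.17} applies to it verbatim with $\hat f^n$ replaced by $\hat{\mathcal R}^n+\delta_t^\lambda\hat\eta^n$. The only genuinely new estimate, the $O(h^2)$ bound on $\tau\sum_n\|\delta_t\eta^n\|$, requires the temporal regularity $u\in W^{1,p}(\check H^2)$, which is exactly what Theorem \ref{thm:utt} provides, so no additional regularity hypotheses beyond those of Theorem \ref{thm:uttt} are needed. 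I would therefore present the proof tersely, referring back to the proofs of Theorems \ref{thm7} and \ref{thm8} for the repeated steps and spelling out only the treatment of $\eta^n$ and $\delta_t\eta^n$.
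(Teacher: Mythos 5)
Your proposal is correct and follows essentially the same route as the paper: the paper's own proof is a two-sentence sketch that takes $\chi=\Lambda_{\lambda}(\hat{U}_h^n)$ in \eqref{eq6.25} and repeats Theorem \ref{thm7} for stability, then takes $\chi=\Lambda_{\lambda}(\hat{\zeta}^n)$ in \eqref{eq6.26}, invokes the Ritz projection bound, and combines with Theorem \ref{thm8}. Your treatment of the extra source term $\delta_t^{\lambda}\hat{\eta}^n$ via $\tau\sum_n\|\delta_t\eta^n\|\leq Qh^2\int_0^T\|\partial_t u\|_{H^2}\,dt$ is a correct detail that the paper leaves implicit.
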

\begin{proof}
We take $\chi=\Lambda_{\lambda}(\hat{U}_h^n)$ in \eqref{eq6.25} and follow the similar procedures in Theorem \ref{thm7} to obtain \eqref{CN:stab}.
We then take $\chi = \Lambda_{\lambda}(\hat{\zeta}^n)$ in \eqref{eq6.26}, use $\|\hat{\eta}^m\|\leq Qh^2\|u\|_{L^{\infty}(H^2)}$, and combine Theorem \ref{thm8} to prove \eqref{CN:er}.
\end{proof}

\section{Numerical simulation}\label{sec6}
We carry out numerical experiments to investigate the convergence behavior of different discretization schemes (i.e. the second-order scheme (\ref{eq6.25}) and the first-order scheme in \cite{ZheLiQiu}) and to substantiate the theoretical findings proved in \S \ref{sec5}. We then numerically investigate the behavior of solutions to model \eqref{eq1.1}-(\ref{eq1.3}) to show its dependence on the parameters. Finally, we show the crossover dynamics of \eqref{eq1.1}-(\ref{eq1.3}) in mechanical vibration to demonstrate the advantage of introducing variable exponent.

\subsection{Convergence test}
We consider the problem \eqref{eq1.1}-(\ref{eq1.3})  with $\Omega=(0,1)$, $\mu = \zeta =1$, $T=1$, $\alpha(t)=1 - \frac{4}{5}t$, $u_0(x)=\sin(\pi x)$ and  $f = 1$.
We fix $M=32$ ($M$ is the degree of freedom in spatial discretization) to test the temporal convergence rates of the fully discrete second-order scheme (\ref{eq6.25}) and the first-order scheme proposed in \cite{ZheLiQiu}, while we fix $N=32$ to test its spatial convergence rates.
We denote the temporal errors and the convergence rate as
\begin{equation*}
 E_{2}(\tau,h) = \sqrt{ h\sum_{j=1}^{J-1} \left|U_j^N(\tau,h)-U_j^{2N}(\tau /2,h)\right|^2 }, \quad {\rm rate}^t = \log_{2} \left(\frac{E_{2}(2\tau,h)}{E_{2}(\tau,h)}\right),
\end{equation*}
and the spatial   errors and the corresponding convergence orders are denoted as follows
\begin{equation*}
 F_{2}(\tau,h) = \sqrt{ h\sum_{j=1}^{J-1}  \left|U_j^N(\tau,h)-U_{2j}^{N}(\tau,h/2)\right|^2}, \quad {\rm rate}^x = \log_{2} \left(\frac{F_{2}(\tau,2h)}{F_{2}(\tau,h)}\right).
\end{equation*}
Numerical results are presented in Table \ref{tab1}, which indicate the second-order accuracy of the scheme (\ref{eq6.25}) in both space and time that is consistent with Theorem \ref{thm9}. Furthermore, the numerical errors of the second-order scheme (\ref{eq6.25}) in Table \ref{tab1} are much smaller than those of the first-order scheme proposed in \cite{ZheLiQiu}, which demonstrates its advantages.

\begin{table}[h]
    \center 
    \caption{Discrete $L^2$ errors and convergence rates of the schemes.} \label{tab1}
    \resizebox{\textwidth}{!}{
    \begin{tabular}{cccccccccccc}
      \toprule
      & \multicolumn{2}{c}{Scheme (\ref{eq6.25})} & &\multicolumn{2}{c}{Scheme (\ref{eq6.25})} & &\multicolumn{2}{c}{Scheme in \cite{ZheLiQiu}} \\
       \cmidrule(lr){2-3}\cmidrule(lr){5-6} \cmidrule(lr){8-9}
       $M$  & $F_{2}(\Delta t,h)$ & ${\rm rate}^x$ & $N$  & $E_{2}(\Delta t,h)$ & ${\rm rate}^t$ & $N$  & $E_{2}(\Delta t,h)$ & ${\rm rate}^t$\\
      \midrule
        32  & $3.5833 \times 10^{-5}$   &    *    &  64   & $7.1473 \times 10^{-6}$  &  *    &  64   & $2.6569 \times 10^{-4}$  &  * \\
        64  & $9.0121 \times 10^{-6}$   &  1.99   &  128  & $1.7857 \times 10^{-6}$  &  2.00 &  128  & $1.3529 \times 10^{-4}$  &  0.97\\
        128 & $2.2589 \times 10^{-6}$   &  2.00   &  256  & $4.4796 \times 10^{-7}$  &  2.00 &  256  & $6.8202 \times 10^{-5}$  &  0.99\\
        256 & $5.6559 \times 10^{-7}$   &  2.00   &  512  & $1.1239 \times 10^{-7}$  &  1.99 &  512  & $3.4232 \times 10^{-5}$  &  0.99\\
        512 & $1.4153 \times 10^{-7}$   &  2.00   &  1024 & $2.8200 \times 10^{-8}$  &  1.99 &  1024 & $1.7147 \times 10^{-5}$  &  1.00\\
        \text{Theory} &     &  2.00  &          &      &    2.00  &  &      &    1.00  \\
      \bottomrule
    \end{tabular}
    }
\end{table}

\subsection{Solutions under different parameters} \label{S1:Nume}

We investigate the solutions to problem \eqref{eq1.1}-(\ref{eq1.3})  and its dependence on the parameters.
Let $\Omega = (0, 1)$, $[0, T] = [0, 10]$, $u_0= 0$, $\mu = 0.1$, $\zeta =1$, and $f = e^{-(t+\frac{(x-0.5)^2}{2})}$, which is roughly an initial point source located at the center $x=0.5$ of $\Omega$. To test the effects of the exponent,  we select $\alpha(t)\equiv\alpha$ for different $0<\alpha<1$ and compute $u(0.5,t)$ with $M = 128$ and $N=1024$ in  Fig.~\ref{Fig_u1}(left), which indicates that: (i) For the case $\alpha =1$, the solution exhibits the wave propagation behavior with decreasing amplitude over time \cite{Cheung,Chung}, which corresponds to the fact that the kernel $k$ becomes the identity such that the governing equation \eqref{eq1.1}  could be reformulated as a wave equation with a damping term; (ii) For the case $\alpha < 1$, the wave behavior is weakened due to the viscoelastic property of the governing equation.

 To investigate the effects of the viscosity $\mu$, we present the solution curves in Fig.~\ref{Fig_u2}(right) under the same data and $\alpha(t) = 1- \f{t}{20}$, from which we observe that the maximum amplitudes of the solutions decrease as $\mu$ increases, which reveals the greater dissipation effects under the larger viscosity $\mu$.

\begin{figure}[h]
\setlength{\abovecaptionskip}{0pt}
\centering
\includegraphics[width=2.5in,height=2in]{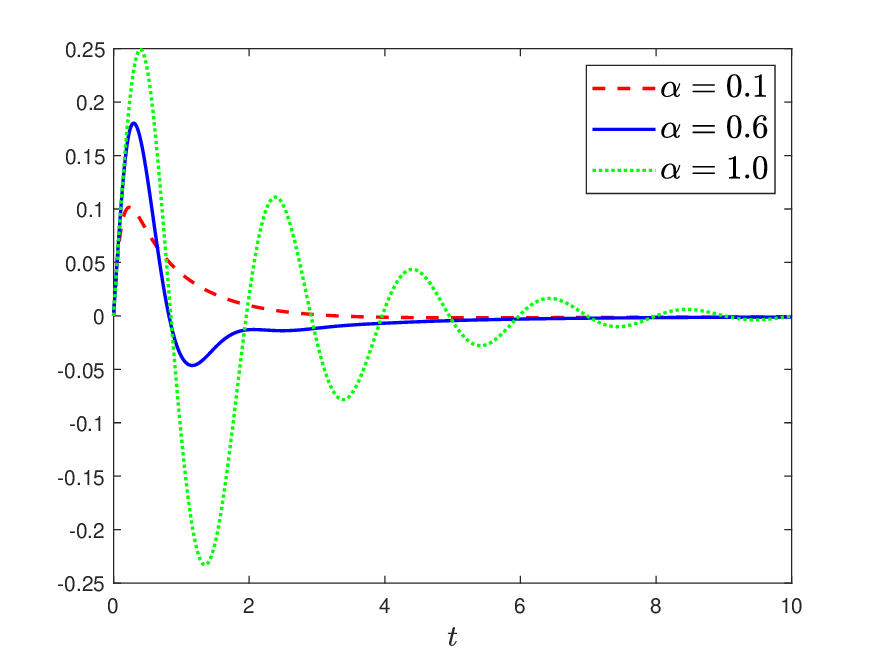}
\includegraphics[width=2.5in,height=2in]{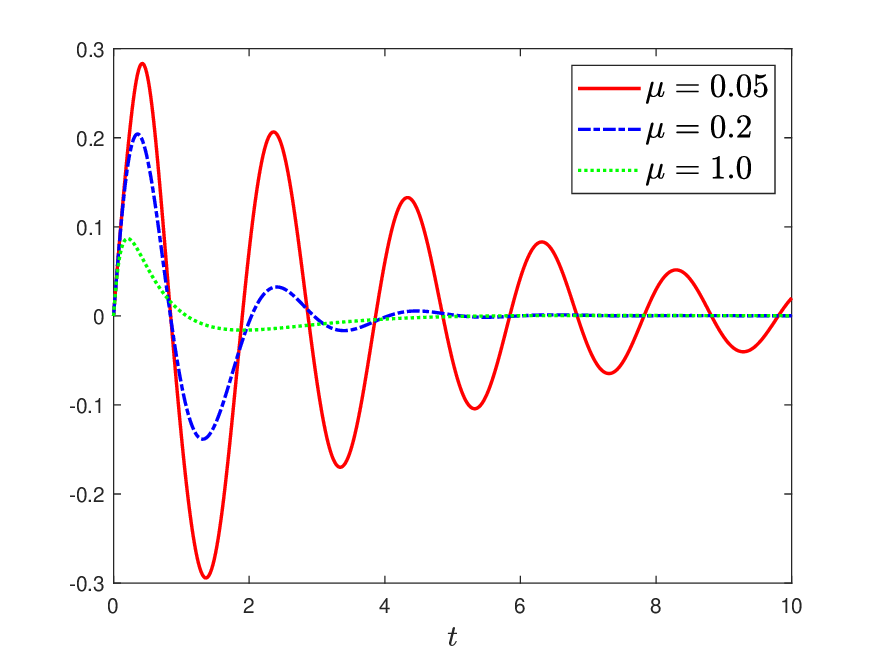}

\caption{Solution curves of $u(0.5,t)$ under different parameters.}
\label{Fig_u1}
\end{figure}

\subsection{Crossover dynamics in mechanical vibration}
We show the crossover dynamics of model (\ref{eq1.1})-(\ref{eq1.3}) in mechanical vibration.  Let $\Omega = (0,10)$, $[0, T]= [0, 150]$, $\mu=0.4$, $\zeta=0.05$, $u_0=0$, $f = e^{-[\frac{t}{2}+\frac{(x-5)^2}{8}]}$, $M=128$ and $N=512$. We plot the solution curves of $u(5,t)$ under different kernels in (\ref{kzz1}) in Fig.~\ref{Fig_u2}, from which we observe that the solution under the multiscale kernel $k$ coincides with that under the quasi-exponential kernel $k_0$ within a relatively short temporal interval, and then gradually captures the long-term viscoelastic behavior modeled by the equation (\ref{eq1.1}) with the power-law kernel $k_\infty$. Such transition indicates the capability of the multiscale kernel $k$ in modeling the crossover dynamics due to, e.g. the change of material properties caused by fatigue and deformation.

\begin{figure}[h]
\setlength{\abovecaptionskip}{0pt}
\centering
\includegraphics[width=5in,height=2in]{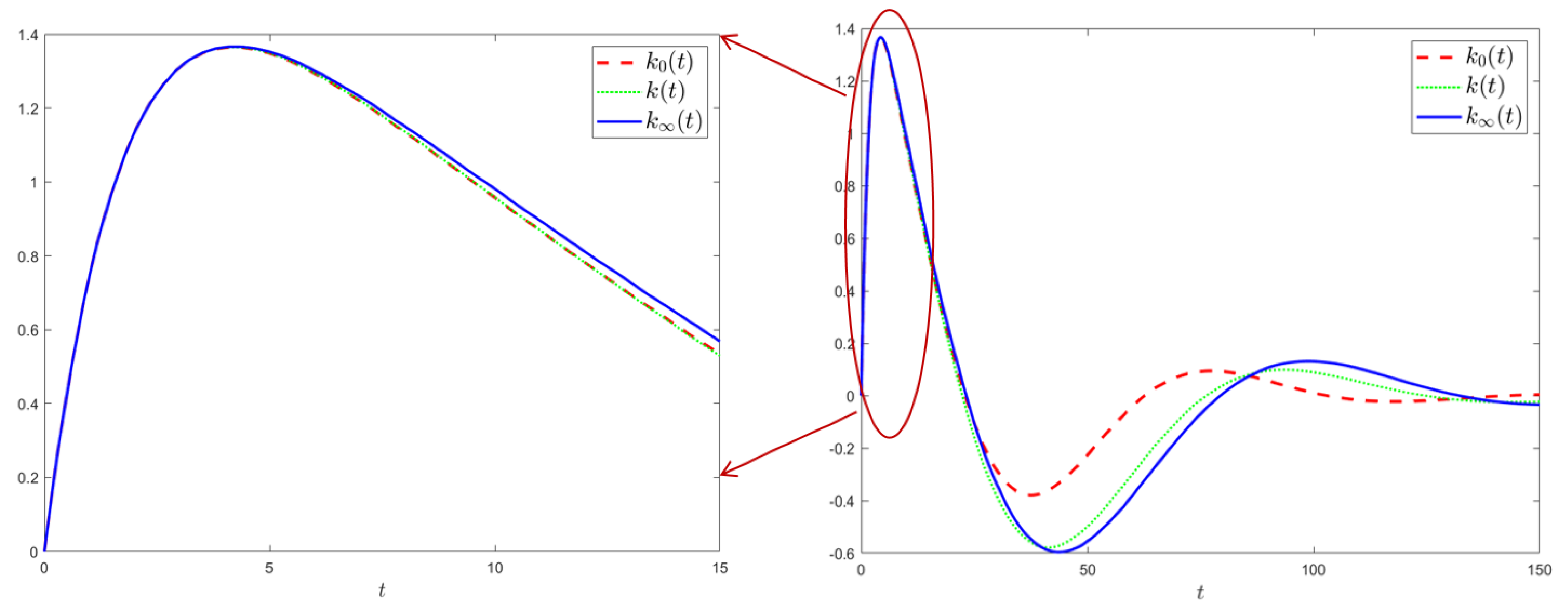}

\caption{Solution curves of $u(5,t)$ under different kernels.}
\label{Fig_u2}
\end{figure}

\section*{Acknowledgments}
This work was partially supported by the National Natural Science Foundation of China (12301555, 12271303), the National Key R\&D Program of China (2023YFA1008903), the Taishan Scholars Program of Shandong Province (tsqn202306083), the Natural Science Foundation of Shan-dong Province for Outstanding Youth Scholars (ZR2024JQ008), the Major Fundamental Research Project of Shandong Province
of China (ZR2023ZD33), and the Postdoctoral Fellowship Program of CPSF (GZC20240938).


\begin{thebibliography}{00}

\bibitem{AdaFou} R.~Adams and J.~Fournier, Sobolev Spaces, Elsevier, San Diego, 2003.

\bibitem{Akr} G. Akrivis, B. Li, and C. Lubich, {\it Combining maximal regularity and energy estimates for time discretizations of quasilinear parabolic equations}, Math. Comp., 86 (2017), 1527--1552.

\bibitem{Bon} A. Bonfanti, J. L. Kaplan, G. Charras, A. Kabl, {\it Fractional viscoelastic models for power-law materials}, Soft Matter, 16 (2020), 6002--6020.

\bibitem{Cannon} J. R. Cannon, Y. Lin,  {\it A priori $L^2$ error estimates for finite-element methods for nonlinear diffusion equations with memory}, SIAM J. Numer. Anal., 27 (1990), 595--607.


\bibitem{Chen} C. Chen, V. Thom\'{e}e, L. B. Wahlbin, {\it  Finite element approximation of a parabolic integro-differential equation with a weakly singular kernel}, Math. Comp., 58 (1992), 587--602.

\bibitem{Cheung} S. W. Cheung, E. T. Chung, Y. Efendiev,  W. T. Leung, {\it  Explicit and energy-conserving constraint energy minimizing generalized multiscale discontinuous Galerkin method for wave propagation in heterogeneous media}, Multiscale Model. Simul., 19 (2021), 1736--1759.

 \bibitem{Chung} E. T. Chung, Y. Efendiev, R. L. Gibson, M. Vasilyeva, {\it A generalized multiscale finite element method for elastic wave propagation in fractured media}, GEM-Int. J. Geomath., 7 (2016), 163--182.

\bibitem{Dan}  R. Dang, Y. Cui, J. Qu, A. Yang, Y. Chen, {\it Variable fractional modeling and vibration analysis
 of variable-thickness viscoelastic circular plate}, Appl. Math. Model., 110 (2022), 767--778.

\bibitem{Deng} W. Deng, B. Li, W. Tian, and P. Zhang, {\it Boundary problems for the fractional and tempered
fractional operators},  Multiscale Model. Simul., 16 (2018), 125--149.

\bibitem{Engler} H. Engler, {\it On the dynamic shear flow problem for viscoelastic liquids},  SIAM J. Math. Anal., 18 (1987), 972--990.

\bibitem{Eva} L.~Evans, {\it Partial Differential Equations}, Graduate Studies in Mathematics, Graduate Studies in Mathematics, V 19, American Mathematical Society, Rhode Island, 1998.


 \bibitem{Fan} W. Fan, X. Hu, and S. Zhu, {\it Numerical reconstruction of a discontinuous diffusive coefficient
in variable-order time-fractional subdiffusion},  J. Sci. Comput., 96 (2023), 13.


\bibitem{Friedman}  A. Friedman, M. Shinbrot, {\it Volterra integral equations in Banach space}, Trans. Amer. Math. Soc., 126 (1967), 31--179.


\bibitem{Hao} Z. Hao, W. Cao,  G. Lin, {\it A second-order difference scheme for the time fractional substantial diffusion equation},  J. Comput. Appl. Math., 313 (2017), 54--69.

\bibitem{Heard}  M. L. Heard, {\it  An abstract parabolic Volterra integrodifferential equation}, SIAM J. Math. Anal., 13 (1982), 81--105.

\bibitem{Heard1}  M. L. Heard, S. M. Rankin III,  {\it A semilinear parabolic Volterra integrodifferential equation}, J. Differential Equ., 71 (1988), 201--233.

\bibitem{Jeo} J. Jeon, N. Leijnse, L. Oddershede, R. Metzler, {\it Anomalous diffusion and power-law relaxation of the time averaged mean squared displacement in worm-like micellar solutions}, New J. Phys., 15 (2013), 045011.

\bibitem{Jiang} L. Jiang and N. Ou, {\it Bayesian inference using intermediate distribution based on coarse multiscale
model for time fractional diffusion equations},  Multiscale Model. Simul., 16 (2018), 327--355.

\bibitem{Jinbook} B. Jin, {\it Fractional differential equations-an approach via fractional derivatives},
Appl. Math. Sci. 206, Springer, Cham, 2021.


\bibitem{Larsson} S. Larsson,  V. Thom\'{e}e, L. Wahlbin, {\it Numerical solution of parabolic integro-differential equations by the discontinuous Galerkin method},  Math. Comp., 67 (1998), 45--71.




\bibitem{LiWanZhe} Y. Li, H. Wang, and X. Zheng, {\it Analysis of a fractional viscoelastic Euler-Bernoulli
beam and identification of its piecewise continuous polynomial order}, Fract. Calc. Appl. Anal., 26 (2023), 2337--2360.

\bibitem{LorHar} C.~Lorenzo and T.~Hartley, {\it Variable order and distributed order fractional operators},  Nonlinear Dyn., 29 (2002), 57--98.

\bibitem{Lub} C. Lubich, {\it Convolution quadrature and discretized operational calculus. I and II.}, Numer.
Math., 52 (1988), 129--145 and 413--425.


\bibitem{McLean1} W. McLean, I. H. Sloan, V. Thom\'{e}e, {\it Time discretization via Laplace transformation of an integro-differential equation of parabolic type}, Numer. Math., 102 (2006), 497--522.


\bibitem{MeeSik} M. Meerschaert, A. Sikorskii, {\it Stochastic Models for Fractional Calculus}, De Gruyter Studies in Mathematics, 2011.

\bibitem{MenYin} R. Meng, D. Yin, and C. Drapaca, {\it Variable-order fractional description of compression deformation
of amorphous glassy polymers},  Comput. Mech., 64 (2019), 163--171.

\bibitem{MukPar} S. Mukherjee, P. Pareek, M. Barma, and S.  Nandi, {\it Stretched exponential to power-law: crossover of relaxation in a kinetically constrained model}, J. Stat. Mech.: Theory Exp., 2024 (2024), 023205.

\bibitem{Mustapha1} K. Mustapha, H. Mustapha, {\it A second-order accurate numerical method for a semilinear integro-differential equation with a weakly singular kernel}, IMA J. Numer. Anal., 30 (2010), 555--578.

\bibitem{Mustapha2} K. Mustapha, H. Brunner, H. Mustapha, D. Sch\"{o}tzau, {\it  An hp-version discontinuous Galerkin method for integro-differential equations of parabolic type}, SIAM J. Numer. Anal., 49 (2011), 1369--1396.

\bibitem{Orosco} J. Orosco, C. F. M. Coimbra, {\it Variable-order modeling of nonlocal emergence in many-body systems: Application to radiative dispersion}, Phys. Rev. E, 98 (2018), 032208.

\bibitem{Pani} A. K. Pani, G. Fairweather, R. I. Fernandes, {\it  ADI orthogonal spline collocation methods for parabolic partial integro-differential equations}, IMA J. Numer. Anal., 30 (2010), 248--276.

\bibitem{Pod} I. Podlubny, {\it Fractional Differential Equations}, Academic Press, 1999.

\bibitem{Qiao} L. Qiao, D. Xu, {\it Compact alternating direction implicit scheme for integro-differential equations of parabolic type}, J. Sci. Comput., 76 (2018), 565--582.

\bibitem{Qiu} W. Qiu, {\it Optimal error estimate of an accurate second-order scheme for Volterra integrodifferential equations with tempered multi-term kernels}, Adv. Comput. Math., 49 (2023), 43.

\bibitem{SunCha} H. Sun, A. Chang, Y. Zhang, and W. Chen, A review on variable-order fractional differential equations: Mathematical foundations, physical models, numerical methods and applications, {\it Fract. Calc. Appl. Anal.}, 22 (2019), 27--59.



\bibitem{Xu1} D. Xu, {\it Uniform $l^1$ Behavior for Time Discretization of a Volterra Equation with Completely Monotonic Kernel II: Convergence}, SIAM J. Numer. Anal., 46 (2008), 231--259.

\bibitem{Xu2} D. Xu, {\it Stability of the difference type methods for linear Volterra equations in Hilbert spaces}, Numer. Math., 109 (2008), 571--595.

\bibitem{Yan} Y. Yan, G. Fairweather, {\it Orthogonal spline collocation methods for some partial integrodifferential equations}, SIAM J. Numer. Anal., 29 (1992), 755--768.

\bibitem{Yin} H. M. Yin, {\it On parabolic Volterra equations in several space dimensions}, SIAM J. Math. Anal., 22 (1991), 1723--1737.

\bibitem{ZheLiQiu} X. Zheng, Y. Li, W. Qiu, {\it Local modification of subdiffusion by initial Fickian diffusion: Multiscale modeling, analysis, and computation},  Multiscale Model. Simul., 22 (2024), 1534--1557.


\end{thebibliography}
\end{document}